\newtheorem{theorem}{Theorem}
\newtheorem{lemma}{Lemma}
\newcommand{\dom}{{\rm dom\,}}
\newcommand{\co}{{\rm co\,}}
\theoremstyle{definition}
\begin{document}
\title[Optimal production and pricing strategies]{Optimal production and pricing strategies in a dynamic model of monopolistic firm}

\author{Dmitry B. Rokhlin}
\author{Georgii Mironenko}

\address{Institute of Mathematics, Mechanics and Computer Sciences,
              Southern Federal University,
Mil'chakova str., 8a, 344090, Rostov-on-Don, Russia}
\email[Dmitry B. Rokhlin]{rokhlin@math.rsu.ru}

\address{Scientific Research Institue ``Specvuzavtomatika'',
Grecheskogo Goroda Volos str., 6, 344011, Rostov-on-Don, Russia}
\email[Georgii Mironenko]{georim89@mail.ru}
\thanks{The research of the D.B.\,Rokhlin is supported by Southern Federal University, project 213.01-07-2014/07.}

\begin{abstract}
We consider a deterministic continuous time model of monopolistic firm, which chooses production and pricing strategies of a single good. Firm's goal is to maximize the discounted profit over infinite time horizon. The no-backlogging assumption induces the state constraint on the inventory level. The revenue and production cost functions are assumed to be continuous but, in general, we do not impose the concavity/convexity property. Using the results form the theory of viscosity solutions and Young-Fenchel duality, we derive a representation for the value function, study its regularity properties, and give a complete description of optimal strategies for this non-convex optimal control problem. In agreement with the results of Chazal et al. (2003), it is optimal to liquidate initial inventory in finite time and then use an optimal static strategy. We give a condition, allowing to distinguish if this static strategy can be represented by an ordinary or relaxed control. The latter is related to production cycles. General theory is illustrate by the example of a non-convex production cost, proposed by Arvan and Moses (1981).
\end{abstract}
\subjclass[2010]{91B38, 49K15, 49L25}
\keywords{Production, pricing, inventory, state constraints, constrained viscosity solution, non-convex production cost}

\maketitle

\section{introduction}
\label{sec:1}
\setcounter{equation}{0}
In the last decades in economical literature there was a considerable interest to the models of firms, performing coordinated decisions on production and pricing: see the reviews \cite{EliSte93,YanGil04,CheLev12}. In such models the ability to influence the demand by dynamic pricing may considerably change optimal inventory levels. The goal of the present paper is to study optimal strategies of a profit maximizing firm in continuous time deterministic setting.

Assume that a firm can produce some good at rate $\alpha\ge 0$. Let $C(\alpha)$ be the related price of production. Being a monopolist, the firm can settle a price $p\ge 0$ of the unit good. The demand rate $q=D(p)$ is known strictly decreasing function of price. An elementary, but natural problem is to maximize the instantaneous profit flow:
\begin{equation} \label{1.1}
 R(q)-C(q)\to\max_{q\ge 0},\quad R(q)=q D^{-1}(q).
\end{equation}
This simple model of a monopoly is well-known: see, e.g., \cite[Chapter 14]{Var92}. For a positive optimal solution $\widehat q$ of (\ref{1.1}) the marginal revenue $R'(\widehat q)$ coincides with the marginal production cost $C'(\widehat q)$.

In the present paper we are interested in continuous time extension of this model. We assume that the firm can continuously produce and sell a single good. Firm's goal is to maximize the discounted profit over the infinite time horizon. The backlogging is not allowed: that is, the inventory level should be non-negative. A preliminary formulation of the correspondent optimal control problem is the following:
\begin{equation} \label{1.2}
\int_0^\infty e^{-\beta t} (R(q_t)-C(\alpha_t))\,dt\to\max,
\end{equation}
\begin{equation} \label{1.3}
 \dot X_t=\alpha_t-q_t,\quad X_0=x\ge 0;\quad X_t\ge 0,\quad t\ge 0.
\end{equation}
Let us call this problem \emph{convex} if $R$ is concave and $C$ is convex.

In contrast to existing literature, we do not introduce any storage cost. The reason lies in the nature of optimal strategies:  even without such costs an optimal inventory path is monotonically decreasing, reaches zero in finite time and stands forever at this level. In fact, the firm needs no warehouse. It is appropriate to make a reservation that for a non-convex production cost $C$ it may be necessary to use relaxed (or randomized) production strategies to retain the inventory $X$ at zero level and meet an optimal demand. In practice, instead of the relaxed control, one may use approximately optimal strategies, corresponding to production cycles, where the inventory oscillates near $0$. So, there is a need in a ``small'' warehouse.

First continuous time production/pricing model of the same sort as (\ref{1.2}), (\ref{1.3}) was proposed in \cite{Pek74}. The horizon in \cite{Pek74} is finite and the demand curve is linear (and depends on time). In 1980-s this line of research was continued in \cite{ThoSetTen84,FeiHar85,EliSte87}. The focus of \cite{ThoSetTen84,EliSte87} was also the finite horizon case, and even a discounting was not introduced. In \cite{FeiHar85} the case of infinite horizon was addressed, but instead of the state constraint $X_t\ge 0$ the authors consider a penalty function. Prior to \cite{FeiHar85}, an infinite horizon model was considered in \cite{ArvMos81,ArvMos82}. Although the papers \cite{ArvMos81,ArvMos82} are not mathematically rigorous, they contain interesting economical insights. In particular, the authors emphasized that if the production cost is non-convex, then the ability to store the product may result in the dominance of a cyclic production strategy over a static one. The example of \cite{ArvMos82} is analyzed below.

The paper, which is most closely related to ours, is \cite{ChaJouTah03}. Although the model of \cite{ChaJouTah03} contains a strictly positive storage cost and is related to the finite horizon case, the several key conclusions remain valid for our model.
Under the assumption that the problem is convex, it was proved that an optimal strategy typically consists of three stages: (i) a redundant inventory is selling, (ii) production is switched on, but sales prevail and the inventory still decreases, (iii) the stock is cleared, the intensities of production and selling are equal and maximize the instantaneous profit (\ref{1.1}). It was also emphasized that an accumulation of inventory is not optimal, while optimal production and price paths are non-decreasing.

In concluding remarks of \cite{ChaJouTah03} it is suggested to consider non-convex production cost functions to explain the phenomenon of inventory accumulation. In fact, this is exactly the subject of the present paper. Our conclusions are somewhat mixed: in conjunction with discounting the \emph{non-convexities can explain production cycles, but not the inventory accumulation}.
%Based on the idea of \cite{ArvMos81,ArvMos82}, we relate production cycles to the approximation of optimal relaxed controls, retaining the inventory at zero level.

Another motivation of the present study is the empirical evidence that it is not uncommon for firms to ``operate in a region of declining marginal costs'' (see \cite{Ram91}). It is mentioned in \cite{Ram91} that the non-convexities in production costs can explain ``volatility of production relative to sales''. The well-know idea is simple. Assume, e.g., that there are three possible production rates: $\alpha\in\{0,1,2\}$ with production costs $C(0)=0$, $C(1)=1$, $C(2)=3/2$. Note that $C$, extended by $C(x)=+\infty$, $x\not\in\{1,2,3\}$, is not convex. We see that to produce two units of good in two units of time, it is better to produce with rate $\alpha=2$ in the first time interval and nothing in the second one, than to produce one unit in each time interval. If the demand intensity equals to one unit, the firm needs some storage. In the dynamic model under consideration an approximately optimal strategy of similar nature will appear below.

The paper is organized as follows. In Section \ref{sec:2} we derive a representation of the value function and study its regularity properties (Theorem \ref{th:1}). It is proved that the value function $v$ is continuously differentiable and strictly concave, even if the problem is not convex. Me mention that passing to the concave (resp., convex) hull of $R$ (resp., $C$) does not change the Hamiltonian and, hence, the value function. The same value function corresponds to the problem with relaxed controls (Theorem \ref{th:2}).

The case of zero initial inventory is considered in Section \ref{sec:3}. For a convex problem optimality of a static strategy $\alpha_t=q_t=\widehat u$, where $\widehat u$ maximizes the instantaneous profit (\ref{1.1}), was proved in \cite{ChaJouTah03}. In Theorem \ref{th:3} we give a necessary and sufficient condition for optimality of this strategy in general case. To give an economical interpretation of this condition, we divide the firm into production and sales departments. A static strategy $\widehat u$ is optimal iff there exists a ``shadow price'' $\eta$ such that the intensity $\widehat u$ is optimal for both departments, trading the good at this price. The least shadow price is identified with the least minimum point of the Hamiltonian and with the marginal indirect utility $v'(0)$ of zero inventory. Certainly, the mentioned condition is satisfied for the convex problem (Theorem \ref{th:4}). If this condition is violated, we construct an optimal relaxed strategy (Theorem \ref{th:5}) and an approximately optimal ordinary strategy, which induces cyclic inventory behavior.

In Section \ref{sec:4} we present a complete description of optimal strategies for positive initial inventory $x>0$ (Theorem \ref{th:6}). The are two major stages. At the first stage the inventory is strictly decreasing and reaches zero. The duration of this stage $\tau$ is finite and is explicitly calculated. It depends only on the relative values $v'(0)$, $v'(x)$ of the marginal indirect utility $v$ and the discounting factor $\beta$. At the second stage the inventory is retained at zero level either by relaxed or by ordinary production and selling strategies, considered in Section \ref{sec:3}. It is worth mentioning that it may be necessary to involve relaxed controls only at the second stage.

In Section \ref{sec:5} we consider a simple example of convex problem with a linear production cost $C(\alpha)=c\alpha$, $\alpha\in [0,\overline\alpha]$. The production process should start before the stock is cleared if and only if the price $c$ of production of the unit good is smaller than the least shadow price $\zeta$.

In Section \ref{sec:6} we consider the example of \cite{ArvMos81}, where the demand $D$ is linear, while the production cost $C$ is concave for small values of $\alpha$ and convex for large ones. In agreement with \cite{ArvMos81,ArvMos82} there are three main cases. (i) The firm does not exist. That is, it is optimal simply to sell an initial inventory. The production stage never starts. (ii) The purely selling stage is followed by production cycles (or relaxed production strategy). (iii) The production starts before the stock is cleared, and an ordinary static strategy is optimal when the stock is cleared. As compared to \cite{ArvMos81}, we specify the exact parameter values, corresponding to these cases.

\section{Representation of the value function} \label{sec:2}
\setcounter{equation}{0}
Suppose that a firm can produce some good at rate $\alpha_t\in A$, where $A$ is a closed subset of $\mathbb R_+=[0,\infty)$. Being a monopolist, the firm can settle a price $p_t\ge 0$ of the unit good. Under the assumption that the demand rate is some known strictly decreasing function of price: $q=D(p)$, it is more convenient to think that the firm chooses dynamically the demand rate $q_t\in Q$. The set $Q\subset\mathbb R_+$ is assumed to be compact set. The inventory level $X$ satisfies the equation
\begin{equation} \label{2.1}
 X_t=x+\int_0^t (\alpha_t-q_t)\,dt,\quad t\ge 0.
\end{equation}
Suppose that the backlogging is not allowed: $X_t\ge 0$, and the sets $Q$, $A$ satisfy the following conditions:
\begin{equation} \label{2.2}
 0\in A\cap Q,\quad A\backslash\{0\}\neq\emptyset,\quad Q\backslash\{0\}\neq\emptyset.
\end{equation}
The solution of (\ref{2.1}) will also be denoted by $X^{x,\alpha,q}$.

Let $R(q)=qp=qD^{-1}(q)$ be the instantaneous revenue, and $C(\alpha)$ the instantaneous production cost. Firm's goal is to maximize the discounted profit over the infinite time horizon:
$$ \int_0^\infty e^{-\beta t} (R(q_t)-C(\alpha_t))\,dt,\quad \beta>0.$$

We assume that $R:Q\mapsto\mathbb R_+$ is continuous, $R(0)=0$, and $C:A\mapsto\mathbb R_+$ is a non-decreasing continuous function. If $A$ is unbounded, then we additionally assume that $C$ is $1$-coercive:
\begin{equation} \label{2.3}
 C(\alpha)/\alpha\to +\infty,\quad A\ni\alpha\to+\infty.
\end{equation}

Denote by $\mathscr A(x)$ the set of Borel measurable functions $\alpha:\mathbb R_+\to A$, $q:\mathbb R_+\to Q$ such that the inventory level (\ref{2.1}) in non-negative. The value function $v$ is defined by
\begin{equation} \label{2.4}
v(x)=\sup_{(\alpha,q)\in\mathscr A(x)}\int_0^\infty e^{-\beta t} (R(q_t)-C(\alpha_t))\,dt, \quad x\ge 0.
\end{equation}

Let us introduce the Hamiltonian
\begin{equation} \label{2.5}
H(z)=\widehat R(z)+\widehat C(z),\quad \widehat R(z)=\sup_{q\in Q}\{R(q)-qz\},\quad
\widehat C(z)=\sup_{\alpha\in A}\{\alpha z-C(\alpha)\}.
\end{equation}
Note, that the functions $\widehat R$, $\widehat C$ are convex and finite on $\mathbb R$. Hence, they are also continuous.
Recall (see \cite{Son86}) that a bounded uniformly continuous function $u:\mathbb R_+\mapsto\mathbb R$ is called a \emph{constrained viscosity solution} of the Hamilton-Jacobi-Bellman (HJB) equation
\begin{equation} \label{2.6}
 \beta u(x)-H(u'(x))=0,\quad x\ge 0,
\end{equation}
if for any $x>0$ (resp., $x\ge 0$) and any test function $\varphi\in C^2(\mathbb R_+)$ such that $x$ is a minimum (resp., maximum) point of $u-\varphi$ on $(0,\infty)$ (resp., on $[0,\infty)$), the inequality
$$  \beta u(x)-H(\varphi'(x))\ge 0 \quad (resp.,\ \le 0)$$
holds true. Note, that \cite{Son86} considers a minimization problem, and the definition above is modified accordingly. Using the terminology of viscosity solutions, one may rephrase this definition by saying that $u$ is a viscosity supersolution of (\ref{2.6}) on $(0,\infty)$ and a viscosity subsolution on $[0,\infty)$.

It is easy to see that a function $u\in C^1(\mathbb R_+)$ is a constrained viscosity solution of (\ref{2.6}) if and only if
\begin{align}
\beta u(x)&=H(u'(x)),\quad x>0, \label{2.7}\\
 \beta u(0)&\le H(z),\quad z\ge u'(0). \label{2.8}
\end{align}
By $u'(0)$ we mean the right derivative. To get the last inequality consider a test function $\varphi$ with $\varphi(0)=u(0)$, $\varphi'(0)=z>u'(0)$.

The results, collected in the next lemma, were proved in \cite{Son86} (Theorems 3.3, 2.1, 2.2).
\begin{lemma} \label{lem:1}
Assume that $A$ is compact. Then the value function $v$ is bounded and uniformly continuous. Moreover, $v$ is the unique constrained viscosity solution of the HJB equation (\ref{2.6}) in the class of bounded uniformly continuous functions.
\end{lemma}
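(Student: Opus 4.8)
The plan is to realize this statement as a special case of the theory of state-constrained optimal control in \cite{Son86} and to check that the hypotheses of the cited theorems hold here. In the terminology of \cite{Son86} we have an infinite-horizon discounted problem with control set $A\times Q$, state dynamics $f(\alpha,q)=\alpha-q$, running payoff $\ell(\alpha,q)=R(q)-C(\alpha)$, discount rate $\beta>0$, and state constraint $X_t\in[0,\infty)$. Since $A$ is assumed compact and $Q$ is compact, the control set is compact; since $f$ and $\ell$ do not depend on the state, $f$ is bounded and (trivially) Lipschitz in $x$ uniformly in the control, and $\ell$ is bounded, say $|\ell|\le M:=\max_Q R+\max_A C<\infty$ (using continuity of $R,C$ and compactness of $Q,A$), and trivially uniformly continuous in $x$. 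Boundedness of $v$ is then immediate: the null control $\alpha\equiv q\equiv 0$ is admissible for every $x\ge 0$ (it keeps $X_t\equiv x\ge 0$), and $|\ell|\le M$, so $-M/\beta\le v(x)\le M/\beta$.

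The only hypothesis of \cite{Son86} that uses the structure of the problem is the compatibility (inward-pointing) condition at the boundary of the constraint set, which here is the single point $x=0$. By (\ref{2.2}) there is $\alpha_0\in A\setminus\{0\}$; with the constant control $(\alpha_0,0)$ one has $f(\alpha_0,0)=\alpha_0>0$, so the velocity points strictly into $[0,\infty)$ --- equivalently, from $x=0$ the control $(\alpha_0,0)$ keeps the trajectory in $[0,\infty)$ with strictly positive speed. This is exactly the condition needed in \cite{Son86} for $v$ to be a \emph{constrained} viscosity solution (a viscosity subsolution up to and including $x=0$) and for the comparison principle that yields uniqueness.

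Granted these verifications, \cite[Theorem 3.3]{Son86} gives that $v$ is bounded, uniformly continuous and a constrained viscosity solution of (\ref{2.6}), while \cite[Theorems 2.1 and 2.2]{Son86} supply the comparison principle and hence uniqueness within the class of bounded uniformly continuous functions. As already noted in the text, \cite{Son86} treats a minimization problem, so the sub/supersolution inequalities in the definition of a constrained viscosity solution are taken in the sense appropriate to the present maximization problem.

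I expect the one point needing a word of care --- and the only real obstacle --- to be that the constraint set $[0,\infty)$ is unbounded, while parts of \cite{Son86} are phrased for bounded domains. This is mild: the relevant arguments are local near the boundary, which here reduces to the single point $x=0$; alternatively, since $|f|$ is bounded by some $L$ and trajectories from $x$ stay in $[0,x+Lt]$ while discounting makes the tail negligible, one may first solve the problem on a large bounded interval $[0,N]$ and then let $N\to\infty$. Either way the statement follows from the cited results.
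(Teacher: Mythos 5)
Your proposal is correct and follows essentially the same route as the paper: the lemma is obtained by invoking Theorems 3.3, 2.1 and 2.2 of \cite{Son86} after checking the hypotheses, the only substantive one being the inward-pointing condition at $x=0$, which you verify via $\alpha_0\in A\setminus\{0\}$ exactly as the paper does via $\sup\{\alpha-q:\alpha\in A,\ q\in Q\}>0$ from (\ref{2.2}). Your additional remarks on the unboundedness of the constraint set and the explicit bound $|v|\le M/\beta$ are sensible elaborations but do not change the argument.
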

Note that the assumption (A3) of \cite{Son86}, concerning the existence of an "inward pointing direction", is satisfied, since
$\sup\{\alpha-q:\alpha\in A, q\in Q\}>0,$
as follows from (\ref{2.2}).

Denote by $\mathscr M_H=\arg\min_{z\in\mathbb R} H(z)$ the set of minimum points of $H$.
\begin{lemma} \label{lem:2}
The set $\mathscr M_H\subset\mathbb R_+$ is non-empty, closed and convex.
\end{lemma}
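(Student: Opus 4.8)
The plan is to exploit that $H$ is a coercive convex function on $\mathbb{R}$, and to read off the location of its minimizers from the behaviour of the two conjugate-type summands. First I would record the elementary monotonicity and growth properties of the building blocks. As a supremum of affine functions with slopes $-q\le 0$, the function $\widehat R(z)=\sup_{q\in Q}\{R(q)-qz\}$ is convex and non-increasing, with $\widehat R(z)\ge R(0)=0$ for all $z$ (take $q=0$); picking $q_0\in Q\setminus\{0\}$, which exists by \eqref{2.2}, gives $\widehat R(z)\ge R(q_0)-q_0z\ge -q_0z\to+\infty$ as $z\to-\infty$. Symmetrically, $\widehat C(z)=\sup_{\alpha\in A}\{\alpha z-C(\alpha)\}$ is convex and non-decreasing, $\widehat C(z)\ge -C(0)$ for all $z$ (take $\alpha=0$), and with $\alpha_0\in A\setminus\{0\}$ one gets $\widehat C(z)\ge \alpha_0z-C(\alpha_0)\to+\infty$ as $z\to+\infty$; finiteness of $\widehat C$ on all of $\mathbb{R}$ when $A$ is unbounded is guaranteed by the coercivity assumption \eqref{2.3} and was already noted after \eqref{2.5}. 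Combining these, $H=\widehat R+\widehat C$ is convex, finite, hence continuous, and $H(z)\to+\infty$ as $|z|\to\infty$.

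Next I would deduce the three qualitative properties of $\mathscr M_H$ from standard convexity facts: a continuous coercive function on $\mathbb{R}$ attains its infimum $m:=\min_{\mathbb R}H$, so $\mathscr M_H\neq\emptyset$; and $\mathscr M_H=\{z\in\mathbb{R}:H(z)\le m\}$ is a sublevel set of the continuous convex function $H$, hence closed and convex.

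It then remains to prove $\mathscr M_H\subset\mathbb{R}_+$, i.e. that no negative point minimizes $H$. The idea is that on the negative half-line $\widehat C$ is frozen at its least value while $\widehat R$ strictly decreases. Indeed, for $z\le 0$ and any $\alpha\in A$ we have $\alpha z\le 0$ and $C(\alpha)\ge C(0)$ by monotonicity of $C$, so $\alpha z-C(\alpha)\le -C(0)$ with equality at $\alpha=0$; hence $\widehat C(z)=-C(0)=\widehat C(0)$ for all $z\le 0$. Let $\overline q$ be a maximizer of $R$ over $Q$, which, since $R\ge 0=R(0)$ and $Q\setminus\{0\}\neq\emptyset$ by \eqref{2.2}, may be chosen strictly positive; then for $z<0$, $\widehat R(z)\ge R(\overline q)-\overline q z=\widehat R(0)+\overline q|z|>\widehat R(0)$. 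Consequently $H(z)=\widehat R(z)+\widehat C(0)>\widehat R(0)+\widehat C(0)=H(0)\ge m$ for every $z<0$, so such $z$ cannot belong to $\mathscr M_H$, which finishes the argument.

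I expect the only mildly delicate point to be this last step — pinning down the behaviour of $H$ on $(-\infty,0)$, and in particular the harmless degenerate case $R\equiv 0$ when choosing $\overline q>0$; everything else is a direct application of standard properties of convex conjugates and of coercive convex functions.
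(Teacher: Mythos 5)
Your proof is correct and follows essentially the same route as the paper's: closedness and convexity from continuity and convexity of $H$, coercivity of $H$ as $z\to+\infty$ via a point $\alpha_0\in A\setminus\{0\}$, and exclusion of negative minimizers by noting that $\widehat C$ is constant equal to $-C(0)$ on $(-\infty,0]$ while $\widehat R(z)>\widehat R(0)$ for $z<0$. Your handling of the strict inequality for $\widehat R$ (choosing a positive maximizer $\overline q$ of $R$, including the degenerate case $R\equiv 0$) is slightly more explicit than the paper's, but the argument is the same.
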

\begin{proof}
The set $\mathscr M_H$ is closed and convex by the continuity and convexity of $H$. For $z<0$ we have
$$\widehat R(z)>\widehat R(0),\quad \widehat C(z)=-C(0)=\widehat C(0),$$
since $R$ is non-negative, $R(0)=0$, and $C$ is non-decreasing. Thus $H$, restricted to $(-\infty,0]$, attains its strict global minimum at $z=0$. For $z>0$ we have
$$\widehat R(z)\ge R(0),\quad \widehat C (z)\ge \alpha z-C(\alpha)$$
for any $\alpha>0$, $\alpha\in A$. Thus, $H(z)\to +\infty$, $z\to +\infty$. It follows that $\emptyset\neq\mathscr M_H\subset\mathbb R_+$.
\end{proof}

Let $I$ be an interval (that is, a convex set) in $\mathbb R$. Recall that a function $\psi:I\mapsto\mathbb R$, is called absolutely continuous if for any $\varepsilon>0$ there exists $\delta>0$ such that
$$ \sum_{i=1}^k |\psi(b_i)-\psi(a_i)|<\varepsilon $$
for any disjoint intervals $(a_i,b_i)$, $i=1,\dots,k$ with $[a_i,b_i]\subset I$ and
$ \sum_{i=1}^k (b_i-a_i)<\delta. $
The set of absolutely continuous functions is denoted by $AC(I)$. A function $\psi:I\mapsto\mathbb R$ is called locally absolutely continuous if $\psi \in AC([a,b])$ for any $[a,b]\subset I$. Any locally absolutely continuous function $\psi$ is a.e. differentiable and can be recovered from its derivative by the Lebesgue interval (see, e.g., \cite[Theorem 3.30]{Leo09}):
$$ \psi(x)=\psi(\overline x)+\int_{\overline x}^x\psi'(y)\,dy,\quad \overline x, x\in I.$$
Writing ``a.e.'' we always mean "almost everywhere with respect to the Lebesgue measure".

We will use the following well known result (see \cite{Nat64} (Chapter IX, Exercise 13) or \cite[Theorem 2]{Vill84}):
\begin{lemma} \label{lem:3}
Let $\psi:[a,b]\mapsto\mathbb R$ be continuous and strictly monotonic. Then $\psi^{-1}$ is absolutely continuous if and only if $\psi'\neq 0$ a.e. on $(a,b)$.
\end{lemma}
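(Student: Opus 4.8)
The plan is to reduce to the increasing case and then combine two classical facts. We may assume $\psi$ is strictly increasing; the decreasing case follows by replacing $\psi$ with $-\psi$ or with $\psi(a+b-\cdot)$, operations under which both the absolute continuity of the inverse and the property ``$\psi'\neq0$ a.e.'' are stable. Write $c=\psi(a)$, $d=\psi(b)$ and $\phi=\psi^{-1}\colon[c,d]\to[a,b]$, a continuous strictly increasing bijection. The three ingredients are: (i) \emph{reciprocity of inverse derivatives} --- for $x_0\in(a,b)$ with image $y_0=\psi(x_0)$, the extended derivative $\psi'(x_0)\in[0,+\infty]$ exists iff $\phi'(y_0)\in[0,+\infty]$ exists, one of them is $0$ exactly when the other is $+\infty$, and otherwise they are mutual reciprocals; this comes from substituting $y=\psi(x)$ in the difference quotient and using continuity of $\psi,\phi$ with strict monotonicity; (ii) a monotone function is differentiable a.e.\ with a finite (here non-negative) derivative; (iii) \emph{Banach--Zaretsky}: a continuous function of bounded variation --- in particular a continuous monotone function --- on a compact interval is absolutely continuous iff it has Luzin's property (N), i.e.\ maps Lebesgue-null sets to Lebesgue-null sets (for the increasing bijection $\phi$ this is also seen directly from $|\phi(J)|=\phi(\sup J)-\phi(\inf J)$ on subintervals $J$). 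Since $\phi$ is continuous and monotone, it suffices to prove that $\phi$ has property (N) if and only if $\psi'\neq0$ a.e.\ on $(a,b)$.

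For the ``only if'' direction I would argue: assume $\phi$ is absolutely continuous, so $\phi'$ is integrable and $P:=\{y\in(c,d):\phi'(y)=+\infty\}$ is null. If $\psi'(x_0)=0$ then by (i) $\phi'(\psi(x_0))=+\infty$, i.e.\ $\psi(x_0)\in P$; hence $\{x\in(a,b):\psi'(x)=0\}\subseteq\psi^{-1}(P)=\phi(P)$. Property (N) of $\phi$ gives $|\phi(P)|=0$, and since $\psi$ is differentiable a.e.\ by (ii), we get $\psi'\neq0$ a.e.\ on $(a,b)$.

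For the ``if'' direction, assume $\psi'\neq0$ a.e.; as $\psi$ is increasing and a.e.\ differentiable with finite derivative, in fact $0<\psi'<+\infty$ a.e.\ on $(a,b)$. Let $N\subseteq[c,d]$ be null; I must show that its outer measure image $m^{*}(\phi(N))$ is $0$. Enlarging $N$ to a Borel null superset, I may assume $N$ is Borel, so $E:=\phi(N)=\psi^{-1}(N)$ is Borel with $\psi(E)=N$ and $m^{*}(\psi(E))=0$. The key estimate is $\int_E\psi'(x)\,dx\le m^{*}(\psi(E))$ for the increasing function $\psi$, proved by a Vitali covering argument: for $x\in E$ with $1/k<\psi'(x)<k$, the small intervals $I\ni x$ with $\psi(I)$ contained in a thin preassigned neighbourhood $V\supseteq\psi(E)$ satisfy $|\psi(I)|\ge|I|/(2k)$; they form a Vitali cover of $E\cap\{1/k<\psi'<k\}$, and a finite disjoint subfamily bounds $|E\cap\{1/k<\psi'<k\}|$ by $\delta+2k|V|$, whence the claim on letting $\delta,|V|\to 0$. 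Granting this, $\int_E\psi'=0$, so $\psi'=0$ a.e.\ on $E$; but $\psi'\neq0$ a.e.\ on $(a,b)\supseteq E$, hence $|E|=0$ and $m^{*}(\phi(N))\le|E|=0$. Thus $\phi$ has property (N), and by (iii) $\phi$ is absolutely continuous.

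The only genuinely nontrivial step is the measure estimate $\int_E\psi'\le m^{*}(\psi(E))$ for monotone $\psi$ (equivalently, the fact that a monotone function sends $\{\psi'=0\}$ into a Lebesgue-null set), which rests on the Vitali covering theorem; I expect this to be the main obstacle, everything else being bookkeeping with the reciprocity of inverse derivatives and the Banach--Zaretsky criterion.
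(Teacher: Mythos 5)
The paper does not prove Lemma~\ref{lem:3} at all: it is quoted as a known result with references to Natanson and to Villani, so there is no in-paper argument to compare against. Your proof is correct and is essentially the standard argument found in those references: reduce to the increasing case, use the reciprocity of extended derivatives of a strictly monotone homeomorphism and its inverse (so that $\psi'=0$ at $x_0$ exactly when $(\psi^{-1})'=+\infty$ at $\psi(x_0)$), characterize absolute continuity of the continuous monotone function $\psi^{-1}$ via Luzin's property (N) (Banach--Zaretsky), and control images of null sets by the Vitali-covering growth estimate for monotone functions. All the steps check out; the only cosmetic remark is that your Vitali sketch directly establishes $|E\cap\{1/k<\psi'<k\}|\le 2k\,m^{*}(\psi(E))$ for each $k$, which already yields $|E|=0$ and is all you need, so the stronger headline inequality $\int_E\psi'\le m^{*}(\psi(E))$ need not be invoked in full.
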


Denote by $\zeta=\min\mathscr M_H\ge 0$ the least minimum point of $H$.
\begin{theorem} \label{th:1}
The value function $v$ is bounded:
$$ v(x)\le \frac{H(0)}{\beta}=\lim_{y\to\infty} v(y)$$
and admits the following representation:
\begin{itemize}
\item[(i)] if $\zeta=0$, then
\begin{equation} \label{2.9}
v(x)=H(0)/\beta,
\end{equation}
\item[(ii)] if $\zeta>0$ then
\begin{equation} \label{2.10}
v(x)=\frac{H(\xi(x))}{\beta}=\frac{H(\zeta)}{\beta}+\int_0^x\xi(y)\,dy,
\end{equation}
where $\xi(x)$ is defined by the equation
$$x=\Psi(\xi):=-\int_\xi^{\zeta}\frac{H'(z)}{\beta z}\,dz,\quad \xi\in (0,\zeta],\quad x\ge 0.$$
\end{itemize}
In case (ii) $v$ is strictly increasing and strictly concave. Moreover, $v'$ is absolutely continuous and satisfies the conditions $v'(0)=\zeta$, $\lim_{x\to\infty} v'(x)=0.$ Finally, $v''<0$ a.e.
\end{theorem}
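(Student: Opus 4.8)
We prove the theorem by guessing $v$ explicitly, checking that the guess is a $C^1$ constrained viscosity solution of the HJB equation (\ref{2.6}) via the reformulation (\ref{2.7})--(\ref{2.8}), and concluding $v$ equals the guess by the uniqueness part of Lemma \ref{lem:1}; the regularity statements then follow by inspecting the formula. If $A$ is unbounded, a standard truncation using the $1$-coercivity (\ref{2.3}) of $C$ reduces the situation to compact $A$ (large production rates being irrelevant, $\widehat C$ and hence $H$ are unaffected by replacing $A$ with $A\cap[0,M]$ for $M$ large), so we assume $A$ compact from now on and Lemma \ref{lem:1} applies. The a priori bound is immediate: since $C$ is nondecreasing, $R(q)-C(\alpha)\le\sup_Q R-C(0)=\widehat R(0)+\widehat C(0)=H(0)$ for every admissible pair, whence $v(x)\le H(0)\int_0^\infty e^{-\beta t}\,dt=H(0)/\beta$; that this bound is attained in the limit $x\to\infty$ follows either from the strategy ``sell the whole stock at a revenue-maximizing rate, produce nothing'' (payoff $\to H(0)/\beta$) or from representation (ii) below.

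By Lemma \ref{lem:2}, $H$ is convex, $H(z)\to+\infty$ as $z\to+\infty$, and $\mathscr M_H$ is a closed subinterval of $\mathbb R_+$ with left endpoint $\zeta$. If $\zeta=0$, then $H(0)=\min_{\mathbb R}H$, so the constant $w\equiv H(0)/\beta$ satisfies (\ref{2.7}) trivially ($w'\equiv0$) and (\ref{2.8}) since $\beta w(0)=H(0)\le H(z)$ for all $z$; being bounded and continuous, $w=v$ by Lemma \ref{lem:1}, which is (\ref{2.9}). Assume now $\zeta>0$. Convexity together with $\zeta=\min\mathscr M_H$ forces $H$ to be strictly decreasing on $(-\infty,\zeta]$ (any non-strict decrease there would, by convexity, push a point of $\mathscr M_H$ strictly below $\zeta$), with one-sided derivative $H'_+(z)\le(H(\zeta)-H(z))/(\zeta-z)<0$ for $z\in[0,\zeta)$. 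Hence the integrand $-H'(z)/(\beta z)$ is positive on $(0,\zeta)$ and, being $\ge c/z$ near $0$ with $c=-H'_+(\zeta/2)/\beta>0$, is non-integrable at $0$.

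Consequently $\Psi:(0,\zeta]\to[0,\infty)$ is well defined, continuous, strictly decreasing, with $\Psi(\zeta)=0$ and $\Psi(0+)=+\infty$, so it has a continuous strictly decreasing inverse $\xi:[0,\infty)\to(0,\zeta]$ with $\xi(0)=\zeta$ and $\xi(x)\to0$ as $x\to\infty$. Put $w(x):=H(\zeta)/\beta+\int_0^x\xi(y)\,dy$; then $w\in C^1(\mathbb R_+)$ with $w'=\xi$. Substituting $y=\Psi(\eta)$ (legitimate, $\Psi$ being absolutely continuous on each $[\varepsilon,\zeta]$ with $\Psi'(\eta)=H'(\eta)/(\beta\eta)$ a.e.) gives $\int_0^x\xi(y)\,dy=\int_\zeta^{\xi(x)}\eta\,\frac{H'(\eta)}{\beta\eta}\,d\eta=\frac1\beta\bigl(H(\xi(x))-H(\zeta)\bigr)$, so $w(x)=H(\xi(x))/\beta$; this is the double equality (\ref{2.10}). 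Then $\beta w(x)=H(\xi(x))=H(w'(x))$ for $x>0$, i.e.\ (\ref{2.7}), and $\beta w(0)=H(\zeta)=\min_{\mathbb R}H\le H(z)$ for $z\ge\zeta=w'(0)$, i.e.\ (\ref{2.8}); since $H(\zeta)/\beta\le w\le H(0)/\beta$ and $w$ is Lipschitz (hence bounded and uniformly continuous), Lemma \ref{lem:1} yields $v=w$, proving (\ref{2.10}) and $\lim_{x\to\infty}v(x)=\lim_{x\to\infty}H(\xi(x))/\beta=H(0)/\beta$.

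It remains to read off the regularity. Since $v'=\xi$ is positive and strictly decreasing, $v$ is strictly increasing and strictly concave, with $v'(0)=\xi(0)=\zeta$ and $v'(x)=\xi(x)\to0$. Lemma \ref{lem:3} applied to $\Psi$ on each $[\varepsilon,\zeta]$ (continuous, strictly monotone, with $\Psi'=H'/(\beta\,\cdot)\ne0$ a.e.\ because $H'<0$ a.e.\ on $(0,\zeta)$) shows $\xi=\Psi^{-1}$ is absolutely continuous on every $[0,N]$; being moreover monotone and bounded with $\int_0^\infty|\xi'|=\zeta<\infty$, $v'=\xi$ is absolutely continuous. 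Finally, a.e.\ on $(0,\infty)$, $v''=\xi'=1/\Psi'(\xi)=\beta\xi/H'(\xi)<0$, since $H'$ exists and is negative on $(0,\zeta)$ off a countable set whose $\xi$-preimage is Lebesgue-null. The genuinely delicate points are the structural facts about $H$ drawn purely from its convexity and the location of $\mathscr M_H$ --- above all the non-integrability of $-H'(z)/(\beta z)$ at $0$, which is precisely what makes $\xi$ defined on all of $[0,\infty)$ and gives $v'(0)=\zeta$ --- and the absolute-continuity/change-of-variables bookkeeping for $\Psi^{-1}$, which is where Lemma \ref{lem:3} enters.
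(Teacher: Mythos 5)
Your overall strategy --- write down the candidate, verify that it is a $C^1$ constrained viscosity solution via (\ref{2.7})--(\ref{2.8}), and invoke the uniqueness in Lemma \ref{lem:1} --- is exactly the paper's, and for compact $A$ your argument is complete and correct: the structural facts about $H$ (strict decrease on $(-\infty,\zeta]$, one-sided derivative bounded away from $0$ near the origin, hence $\Psi(0+)=+\infty$), the identity $H(\zeta)/\beta+\int_0^x\xi=H(\xi(x))/\beta$ (you get it by substitution, the paper by the chain rule for $H\circ\xi$ --- a cosmetic difference), and the regularity bookkeeping via Lemma \ref{lem:3} all match the paper's proof.

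The genuine gap is your first sentence: for unbounded $A$ you dismiss the reduction to compact $A$ as a ``standard truncation'', justified by the claim that $\widehat C$ and hence $H$ are unaffected by replacing $A$ with $A\cap[0,M]$. Two problems. First, $\widehat C$ and $H$ \emph{are} affected for large arguments $z$; the coercivity (\ref{2.3}) only gives $H_M=H$ on a fixed compact $z$-interval once $M$ is large enough. Second, and more seriously, agreement of the Hamiltonians on $[0,\zeta]$ does not by itself yield $v=v_M$: these are suprema over different admissible classes ($\mathscr A(x)$ strictly contains the truncated class), and Lemma \ref{lem:1} is available only for the truncated problem. The inequality $v_M\le v$ is trivial, but $v\le v_M$ --- that strategies using arbitrarily large production rates cannot do better --- needs an argument; a naive pathwise truncation of $\alpha$ fails because it lowers the inventory and can violate the state constraint $X_t\ge 0$. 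The paper closes this by a verification step: differentiate $e^{-\beta t}v_M(X_t)$ along an arbitrary $(\alpha,q)\in\mathscr A(x)$ and use that $\beta v_M(X_t)=H(v_M'(X_t))\ge R(q_t)-q_tv_M'(X_t)+\alpha_tv_M'(X_t)-C(\alpha_t)$ for \emph{all} $\alpha_t\in A$, not just $\alpha_t\le M$, which holds precisely because $H_M=H$ on the range $[0,\zeta]$ of $v_M'$. This supermartingale-type inequality is what your one-liner silently assumes, and it is the only part of the theorem that goes beyond the viscosity-solution verification; you should supply it (or an equivalent direct estimate) to make the unbounded case rigorous.
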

\begin{proof}
First we check that (\ref{2.9}), (\ref{2.10}) are constrained viscosity solutions of (\ref{2.6}).  If $\zeta=0$, then (\ref{2.9}) satisfies (\ref{2.7}), (\ref{2.8}). Hence, (\ref{2.9}) is a constrained viscosity solution of (\ref{2.6}).

Assume that $\zeta>0$. Since $H$ is convex, its derivative $H'$ exists on a set $G=(0,\zeta)\backslash D$, where $D$ is at most countable, and $H'$ is non-decreasing on $G$: see \cite{Roc70} (Theorem 25.3). Furthermore, $H'(x)\le 0$, $x\in G$ since $H$ is decreasing on $(0,\zeta)$. If $H'(x)=0$ for some $x\in G$ then $x<\zeta$ is a minimum point of $H$. Hence, $H'(x)<0$, $x\in G$.

Consider the following continuous strictly decreasing function
$$ \Psi(\xi)=-\int_\xi^{\zeta}\frac{H'(z)}{\beta z}\,dz,\quad \xi\in (0,\zeta].$$
We have $\Psi(\zeta)=0$,
$$ \Psi(0+)=-\lim_{\xi\searrow 0} \int_\xi^{\zeta}\frac{H'(s)}{\beta s}\,ds=+\infty,
$$
since $H'$ is a.e. bounded from above by a negative constant in a right neighborhood of $0$.
It follows that the formula
$$ x=\Psi(\xi)$$
correctly defines the inverse function $\xi=\Psi^{-1}:\mathbb R_+\mapsto (0,\zeta]$, which is strictly decreasing and continuous.

Moreover, since $\Psi'(\xi)<0$ a.e. on $(0,\zeta)$, the function $\xi(x)=\Psi^{-1}(x)$ is locally absolutely continuous on $\mathbb R_+$ by Lemma \ref{lem:3}. But since $\xi$ is monotone and bounded, it follows that $\xi\in AC(\mathbb R_+)$.
By Lemma \ref{lem:3} we also conclude that $\xi'(x)<0$ a.e. on $(0,\infty)$, since $\Psi=\xi^{-1}$ is locally absolutely continuous on $(0,\zeta]$.

Furthermore, by Corollary 3.50 of \cite{Leo09} we can apply the chain rule:
\begin{equation} \label{2.11}
 1=\frac{d}{dx}\Psi(\xi(x))=\frac{H'(\xi(x))}{\beta\xi(x)}\xi'(x)\quad \textrm{a.e. on } (0,\infty).
\end{equation}
The function $H(\xi)$ is absolutely continuous as a superposition of an absolutely continuous function $\xi$ an a Lipschitz continuous function $H$ (recall that $H$ is convex). Hence,
$$ H(\xi(x))=H(\zeta)+\int_0^x \frac{d}{dy}H(\xi(y))\,dy.$$
By the chain rule and formula (\ref{2.11}), we get
$$ H(\xi(x))=H(\zeta)+\int_0^x\xi'(y) H'(\xi(y))\,dy=H(\zeta)+\beta\int_0^x \xi(y)\,dy.$$

Now it is easy to see that
$$ u(x)=\frac{H(\zeta)}{\beta}+\int_0^x\xi(y)\,dy=\frac{H(\xi(x))}{\beta}$$
is a constrained viscosity solution of (\ref{2.6}). Indeed, $u'(x)=\xi(x)$, $x>0$ and
$$\beta u(x)-H(u'(x))=\beta u(x)-H(\xi(x))=0,\quad x>0.$$
The boundary condition (\ref{2.8}) is satisfied by the definition of $\zeta$:
$$ \beta u(0)=H(\zeta)\le H(z)\quad \text{for all}\quad z.$$

Note, that $u$ is strictly increasing since $u'=\xi>0$, and is strictly concave, since $\xi$ is strictly decreasing (see \cite{HirUrrLem01}, Chapter B, Theorem 4.1.4). Furthermore, if $\zeta>0$ then
$$ u(x)\le\lim_{y\to\infty} u(y)=\lim_{y\to\infty}\frac{H(\xi(y))}{\beta}=\frac{H(0)}{\beta}.$$
Other properties of the derivatives of (\ref{2.10}), mentioned in the statement of Theorem \ref{th:1}, are evident from the construction of $\xi$.

We have proved that formulas (\ref{2.9}), (\ref{2.10}) define a constrained viscosity solution of (\ref{2.6}). If $A$ is compact, then (\ref{2.9}), (\ref{2.10}) is the value function (\ref{2.4}) by the uniqueness result, stated in Lemma \ref{lem:1}.

In general case put $A_c=A\cap [0, c]$, and denote by $H_c$, $v_c$ the correspondent Hamiltonian and value function.
For $z\le 0$ we have $H_c(z)=H(z)=-C(0)$. Let
$$ \widehat\alpha(z)\in\arg\max_{a\in A}\{z\alpha-C(\alpha)\},\quad z>0. $$
If $\widehat\alpha(z)>0$, then the inequality
$$-C(0)\le \widehat\alpha(z) \left(z -\frac{C(\widehat\alpha(z))}{\widehat\alpha(z)}\right)$$
and the coercivity condition (\ref{2.3}) imply that $\widehat\alpha(z)$, $z\in [0,\overline z]$ is bounded from above for any fixed $\overline z>0$.
Thus, for any $\overline z>0$ there exists $\overline c>0$ such that
$$ \sup_{a\in A}\{z\alpha-C(\alpha)\}=\sup_{a\in A_c}\{z\alpha-C(\alpha)\}$$
and $H_c(z)=H(z)$ for $|z|\le \overline z$, $c\ge\overline c$. Taking $\overline z>\zeta$, from the convexity of $H$ we conclude that $\zeta$ is the least minimum point of $H_c$
%and $H_r(z)=H(z)$, $z\in [0,\zeta]$
for $c$ large enough.

Since the expressions (\ref{2.9}), (\ref{2.10}) depend only on the values of the Hamiltonian on $[0,\zeta]$, it follows that they define a constrained viscosity solution of (\ref{2.6}) with the Hamiltonians $H$ and $H_c$ for $c\ge\overline c$. But, by Lemma \ref{lem:1}, $v_c$ is the unique constrained viscosity solution of (\ref{2.6}) with the Hamiltonians $H_c$. It follows that the functions (\ref{2.9}), (\ref{2.10}) coincide with $v_c$, $c\ge\overline c$.

Clearly, $v_c\le v$. It remains to prove the reverse inequality. For any admissible strategy $(\alpha,q)\in\mathscr A(x)$ we have
\begin{equation} \label{2.12}
\frac{d}{dt}(e^{-\beta t}v_c(X_t))=e^{-\beta t}(-\beta v_c(X_t)+(\alpha_t-q_t)v_c'(X_t))\quad a.e.,
\end{equation}
where $X$ is defined by (\ref{2.1}). From the HJB equation (\ref{2.7}) we get
\begin{equation} \label{2.13}
 \beta v_c(X_t)\ge R(q_t)-q_t v_c'(X_t)+\alpha_t v_c'(X_t)-C(\alpha_t)\quad a.e.
\end{equation}
Note, that the equality (\ref{2.7}) is satisfied for $x=0$ by the continuity property. From (\ref{2.12}), (\ref{2.13}) we obtain the inequality
$$ -\frac{d}{dt}(e^{-\beta t}v_c(X_t))\ge e^{-\beta t} (R(q_t)-C(\alpha_t)) \quad a.e. $$
It follows that
$$ v_c(x)-e^{-\beta T}v_c(X_T)\ge\int_0^T e^{-\beta t} (R(q_t)-C(\alpha_t))\, dt$$
for any $T>0$. Since $v_c$ is bounded, we conclude that
$$ v_c(x)\ge \int_0^\infty e^{-\beta t} (R(q_t)-C(\alpha_t)),\quad (\alpha,q)\in\mathscr A(x).$$
Thus, $v_c\ge v$.
\end{proof}

In the course of the proof we have showed that passing from the set $A$ to $A\cap [0,c]$ does not affect the value function $v$ for $c$ large enough.

Note also that if $\zeta>0$, then the optimal discounted gain is always lower than $H(0)/\beta$. If $\zeta=0$, the discounted gain $H(0)/\beta$ can be obtained with zero initial inventory. Moreover, any initial inventory $x>0$ is useless.

For a function $f:\mathbb R\mapsto (-\infty,+\infty]$ denote by $f^*:\mathbb R\mapsto (-\infty,+\infty]$ its Young-Fenchel transform:
$$ f^*(z)=\sup_{x\in\mathbb R}\{zx-f(x)\},$$
and by $\co f$ the convex hull:
$$ (\co f)(x)=\inf\{\delta f(x_1)+(1-\delta) f(x_2):\delta\in [0,1],\ x_i\in\dom f,\ \delta x_1+(1-\delta) x_2=x\},$$
where $\dom f=\{x:f(x)<\infty\}$. For $G\subset\mathbb R$ denote by $\co G$ the intersection of all intervals, containing $G$.
The following result can be found in \cite{HirUrrLem93} (Chapter X, Proposition 1.5.4).

\begin{lemma} \label{lem:4}
Let $G\subset\mathbb R$ be a nonempty closed set and $f:G\mapsto\mathbb R$ a continuous function. Put $f(x)=+\infty$, $x\not \in G$ and assume that $f$ is $1$-coercive: $f(x)/|x|\to+\infty$, $|x|\to\infty$. Then
$$\co f=f^{**},\quad \dom (\co f)=\co G,$$
and for any $x\in\co G$ there exist $x_1, x_2\in G$ and $\delta\in (0,1)$ such that
\begin{equation} \label{2.14}
x=\delta x_1+(1-\delta) x_2,\quad (\co f)(x)=\delta f(x_1)+(1-\delta) f(x_2).
\end{equation}
\end{lemma}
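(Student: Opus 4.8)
The statement to be proved is Lemma~\ref{lem:4}, which the paper explicitly attributes to \cite{HirUrrLem93} (Chapter X, Proposition 1.5.4). So rather than re-deriving a known convex-analysis fact from scratch, the plan is to give a reference-based proof that isolates the two nontrivial ingredients: the identity $\co f = f^{**}$ together with $\dom(\co f) = \co G$, and the \emph{attainment} in the convex-hull formula~(\ref{2.14}) by a two-point combination.

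The plan is as follows. First I would recall that for any function $g:\mathbb R\to(-\infty,+\infty]$ one always has $\co g \ge g^{**}$, where $\co g$ denotes the (not necessarily closed) convex hull defined in the excerpt and $g^{**}$ is the closed convex hull; equality can fail only through a loss of lower semicontinuity at the relative boundary of the domain. The job is therefore to show that under $1$-coercivity no such loss occurs. Since $f$ is $1$-coercive, so is $\co f$: indeed any convex combination $\sum \delta_i f(x_i)$ with $\sum \delta_i x_i = x$ is bounded below by $\big(\min_i f(x_i)/|x_i|\big)\sum\delta_i|x_i| \ge (\text{const})\,|x|$ once $|x|$ is large, using $|x|\le \sum\delta_i|x_i|$ and coercivity of $f$. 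A $1$-coercive convex function on $\mathbb R$ with nonempty domain is automatically lower semicontinuous and proper (its epigraph is closed because the coercivity forbids vertical asymptotes and escape to $-\infty$), hence $\co f$ is already closed, giving $\co f = \overline{\co}\, f = f^{**}$. For the domain identity: $\dom(\co f)\subset \co(\dom f)=\co G$ is immediate from the definition, and conversely any $x=\delta x_1+(1-\delta)x_2$ with $x_1,x_2\in G$ has $(\co f)(x)\le \delta f(x_1)+(1-\delta)f(x_2)<\infty$, so $\co G\subset\dom(\co f)$.

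Next, the attainment~(\ref{2.14}). Fix $x\in\co G$; by Carath\'eodory's theorem in $\mathbb R$ (applied to the epigraph, which is a convex subset of $\mathbb R^2$, so one needs at most $2+1=3$ points, but a standard refinement for points of the graph/boundary reduces this to $2$), the infimum defining $(\co f)(x)$ is approached by sequences of two-point combinations $x = \delta^n x_1^n + (1-\delta^n)x_2^n$ with $\delta^n f(x_1^n)+(1-\delta^n)f(x_2^n)\to (\co f)(x)$. The $1$-coercivity of $f$ forces the sequences $x_1^n,x_2^n$ to stay bounded: otherwise one of the terms $\delta^n f(x_1^n)$ or $(1-\delta^n)f(x_2^n)$ would tend to $+\infty$ (the other being bounded below by coercivity), contradicting convergence to the finite value $(\co f)(x)$. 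Passing to a subsequence, $x_i^n\to x_i\in G$ (closedness of $G$), $\delta^n\to\delta\in[0,1]$, and by continuity of $f$ on $G$ we get $x=\delta x_1+(1-\delta)x_2$ and $\delta f(x_1)+(1-\delta)f(x_2)=(\co f)(x)$. If $\delta\in\{0,1\}$, i.e.\ the minimum is attained at a single point $x_1=x\in G$, one may trivially write $x$ as a combination with any second point of $G$ and $\delta$ adjusted into $(0,1)$ only when $f$ is affine between them; in the generic case $x$ is then an extreme point of $\dom(\co f)$ where $\co f = f$, and the representation with $\delta\in(0,1)$ is obtained by taking $x_1=x_2=x$ — here one should note that the statement's insistence on $\delta\in(0,1)$ is harmless because $x_1=x_2$ is permitted.

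The main obstacle is the boundary bookkeeping in the last step: cleanly handling the case where the optimal two-point representation degenerates to a single point (so that a \emph{strict} convex combination with $\delta\in(0,1)$ must be manufactured), and making sure the Carath\'eodory reduction genuinely delivers \emph{two} points of $G$ rather than three points of $\mathbb R^2$. Both are standard but fiddly; the cleanest route is to quote Proposition~1.5.4 of \cite{HirUrrLem93} directly for~(\ref{2.14}) and only spell out the coercivity argument that guarantees the minimizing sequence does not escape to infinity, since that is the one place the hypotheses of the lemma are actually used.
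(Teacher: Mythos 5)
The paper does not prove this lemma at all: it is quoted directly from \cite{HirUrrLem93} (Chapter X, Proposition 1.5.4), so your decision to lean on that reference is exactly what the authors do, and to that extent the proposal matches the paper. The problem is with the supplementary argument you give for $\co f=f^{**}$. The claim that ``a $1$-coercive convex function on $\mathbb R$ with nonempty domain is automatically lower semicontinuous'' is false. Take $g(x)=x^{2}$ for $x>0$, $g(0)=1$, $g(x)=+\infty$ for $x<0$: this $g$ is proper, convex and $1$-coercive, yet its epigraph is not closed, since the points $(1/n,1/n^{2})$ belong to it while their limit $(0,0)$ does not. Coercivity controls behaviour at infinity; it says nothing about lower semicontinuity at \emph{finite} boundary points of the domain, which is precisely where $\co f$ could differ from $f^{**}$. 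So the step on which your first conclusion rests does not hold as stated.

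The repair is to reverse the logical order, which is also how the cited proposition is proved: first establish the attainment statement (\ref{2.14}) by your compactness argument (the paper's definition of $\co f$ is already an infimum over two-point combinations, so no Carath\'eodory reduction is even needed; coercivity keeps a minimizing sequence of pairs bounded, closedness of $G$ and continuity of $f$ on $G$ then give a limiting pair realizing the infimum), and only \emph{then} deduce that $\co f$ is lower semicontinuous, hence closed, hence equal to $f^{**}$. It is the continuity of $f$ on the closed set $G$, not coercivity alone, that rules out a drop in value at the boundary of $\co G$. A smaller gap in the boundedness step: if $x_{1}^{n}\to\infty$ while $\delta^{n}|x_{1}^{n}|\to0$, neither term of $\delta^{n}f(x_{1}^{n})+(1-\delta^{n})f(x_{2}^{n})$ need tend to $+\infty$; in that case $x_{2}^{n}\to x\in G$ and the value of the combination is asymptotically at least $f(x)\ge(\co f)(x)$, so such degenerate sequences can be discarded, but this has to be said. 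Your handling of the case $\delta\in\{0,1\}$ by taking $x_{1}=x_{2}=x$ is fine.
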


The functions $C$, $-R$ satisfy the conditions of Lemma \ref{lem:4}. Put $C(\alpha)=+\infty$, $\alpha\not\in A$ and $R(q)=-\infty$, $q\not\in Q$. To unify the notation, denote by
$$\widetilde C=C^{**}=\co C,\quad \widetilde R=-(-R)^{**}=-\co(-R)$$
the closed convex (resp., concave) hull of $C$ (resp., of $R$).

Comparing with the previous notation:
%\begin{equation} \label{2.15}
$$\widehat C(z)=C^*(z)=\sup_{x\in\mathbb R}\{xz-C(x)\}=C^{***}(z)=\sup_{x\in\mathbb R}\{xz-\widetilde C(x)\},$$
%\end{equation}
\begin{align} \label{2.15}
\widehat R(z)=\sup_{x\in\mathbb R}\{R(x)-xz\}=\sup_{x\in\mathbb R}\{x\cdot (-z)-(-R(x))\}=(-R)^*(-z)\nonumber\\
=(-R)^{***}(-z)=\sup_{x\in\mathbb R}(x\cdot (-z)-(-R)^{**}(x)\}=\sup_{x\in\mathbb R}\{\widetilde R(x)-xz\},
\end{align}
we conclude that the Hamiltonian (\ref{2.5}) can be represented as follows:
\begin{equation} \label{2.16}
 H(z)=C^*(z)+(-R)^*(-z)=\sup_{x\in\mathbb R}\{xz-\widetilde C(x)\}+\sup_{x\in\mathbb R}\{\widetilde R(x)-xz\}.
\end{equation}

Let us introduce the \emph{convexified problem}:
\begin{equation} \label{2.17}
\widetilde v(x)=\sup_{(\alpha,q)\in\widetilde{\mathscr A}(x)}\int_0^\infty e^{-\beta t}(\widetilde R(q_t)-\widetilde C(\alpha_t))\,dt,
\end{equation}
where $\widetilde{\mathscr A}(x)$ is the set of Borel measurable functions $\alpha:\mathbb R_+\mapsto\co A$, $q:\mathbb R_+\mapsto \co Q$ such that $X_t^{x,\alpha,q}\ge 0$. Note, that $\widetilde C$ still satisfies condition (\ref{2.3}): see \cite[Chapter E, Proposition 1.3.9(ii)]{HirUrrLem01}. Clearly, $v\le\widetilde v$. But, since the Hamiltonian for the convexified problem is the same as for the original one (see (\ref{2.16})), by Theorem \ref{th:1} we have $\widetilde v=v$.

Let us extend the classes of production and pricing strategies. The \emph{relaxed controls} $q_t(dy)$ and $\alpha_t(dy)$ are the mappings from $[0,\infty)$ to the sets of probability measures on $Q$ and $A$ such that the functions
$$ t\mapsto\int_Q\varphi(y)\,q_t(dy),\qquad t\mapsto\int_A\varphi(y)\,\alpha_t(dy)$$
are Borel measurable for any continuous function $\varphi$. The inventory dynamics under relaxed controls is given by
%\begin{equation} \label{2.18}
$$ X_t=x+\int_0^t y\,q_t(dy)-\int_0^t y\,\alpha_t(dy).$$
%\end{equation}
The class $\mathscr A_r(x)$ of admissible relaxed controls contains those which keep $X_t$ non-negative. The related value function is defined by
\begin{equation} \label{2.18}
v_r(x)=\sup_{(\alpha,q)\in\mathscr A_r(x)}\left(\int_0^\infty e^{-\beta t}\int_Q R(y)\,q_t(dy)dt -\int_0^\infty e^{-\beta t} \int_A C(y)\,\alpha_t(dy)dt\right).
\end{equation}
We call (\ref{2.18}) the \emph{relaxed problem}.

Note, that any admissible relaxed strategy $(\alpha_t(dx),q_t(dx))$ induces an admissible ordinary strategy
$$\left(\int_A x\,\alpha_s(dx),\int_Q x\, q_s(dx)\right)\in(\co A,\co Q)$$
for the convexified problem (\ref{2.17}). Hence, the Jensen inequality implies that
\begin{align*}
&\int_0^\infty e^{-\beta t}\left(\int_Q R(x)\, q_s(dx)-\int_A C(x)\,\alpha_s(dx)\right)\,ds\\
&\le \int_0^\infty e^{-\beta t}\left(R\left(\int_Q x\, q_s(dx)\right)-C\left(\int_A x\,\alpha_s(dx)\right)\right)\,ds\le\widetilde v(x),
\end{align*}
since $\widetilde C\le C$, $\widetilde R\ge R$. Thus, $v_r\le\widetilde v$, and an evident inequality $v\le v_r$ implies the following result.
\begin{theorem} \label{th:2}
The value functions (\ref{2.4}), (\ref{2.17}), (\ref{2.18}) related to original, convexified and relaxed problems coincide:
$v=v_r=\widetilde v.$
\end{theorem}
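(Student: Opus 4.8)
The plan is to establish the two-sided chain
\[
v(x)\ \le\ v_r(x)\ \le\ \widetilde v(x),
\]
and then to invoke Theorem \ref{th:1}: since the convexified problem \eqref{2.17} has the same Hamiltonian as the original one, by \eqref{2.16}, the theorem gives $\widetilde v=v$, so the chain collapses to $v=v_r=\widetilde v$. The first inequality $v\le v_r$ is immediate: any ordinary strategy $(\alpha,q)\in\mathscr A(x)$ may be identified with the relaxed strategy $(\delta_{\alpha_t},\delta_{q_t})$ of Dirac masses, which belongs to $\mathscr A_r(x)$, induces the same inventory path $X^{x,\alpha,q}$ and the same value of the functional; hence $\mathscr A(x)$ embeds into $\mathscr A_r(x)$ and the supremum in \eqref{2.18} dominates the one in \eqref{2.4}.

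For the inequality $v_r\le\widetilde v$ I would take an arbitrary $(\alpha,q)\in\mathscr A_r(x)$ and pass to its barycentric ordinary strategy
\[
\bar\alpha_t=\int_A y\,\alpha_t(dy),\qquad \bar q_t=\int_Q y\,q_t(dy).
\]
Measurability of $t\mapsto(\bar\alpha_t,\bar q_t)$ follows from the defining property of relaxed controls (test against $\varphi(y)=y$ on the compact set $Q$; on an unbounded $A$ take the monotone limit of the bounded continuous truncations $\varphi_n(y)=\min(y,n)$). The crucial point is that the inventory path does not see the controls beyond their first moments, so $X^{x,\bar\alpha,\bar q}=X^{x,\alpha,q}\ge 0$ and $(\bar\alpha,\bar q)\in\widetilde{\mathscr A}(x)$ is admissible for the convexified problem, with $\bar q_t\in\co Q$ and $\bar\alpha_t\in\co A$. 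Then Jensen's inequality applied to the concave hull $\widetilde R$ and the convex hull $\widetilde C$, together with $R\le\widetilde R$ and $\widetilde C\le C$, yields, pointwise in $t$,
\[
\int_Q R(y)\,q_t(dy)\le\int_Q\widetilde R(y)\,q_t(dy)\le\widetilde R(\bar q_t),\qquad
\int_A C(y)\,\alpha_t(dy)\ge\int_A\widetilde C(y)\,\alpha_t(dy)\ge\widetilde C(\bar\alpha_t).
\]
Multiplying by $e^{-\beta t}$ and integrating shows that the relaxed objective at $(\alpha,q)$ is bounded above by the convexified objective at $(\bar\alpha,\bar q)$, hence by $\widetilde v(x)$; taking the supremum over $\mathscr A_r(x)$ gives $v_r\le\widetilde v$.

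The main obstacle is the bookkeeping for unbounded $A$: one must ensure that $\bar\alpha_t$ is finite a.e.\ so that the barycentric strategy is genuinely admissible and the Jensen estimate above makes sense. This is handled by the $1$-coercivity \eqref{2.3}, which is inherited by $\widetilde C$ (as noted after \eqref{2.17}), combined with the legitimate reduction to relaxed strategies of finite discounted gain — legitimate precisely because Theorem \ref{th:1} already guarantees $\widetilde v(x)<\infty$, so a divergent relaxed objective cannot compete for the supremum. Everything else — the Dirac embedding, the invariance of the inventory path under barycentration, and the two Jensen inequalities — is routine once the admissibility of $(\bar\alpha,\bar q)$ is in place.
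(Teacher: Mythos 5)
Your proposal is correct and follows essentially the same route as the paper: the identity $\widetilde v=v$ from the coincidence of Hamiltonians via Theorem \ref{th:1}, the trivial embedding giving $v\le v_r$, and the barycentric reduction of a relaxed control to an ordinary control for the convexified problem combined with Jensen's inequality to get $v_r\le\widetilde v$. Your extra care about measurability and finiteness of $\bar\alpha_t$ for unbounded $A$ is a reasonable tightening of details the paper leaves implicit, but it does not change the argument.
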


The equality $v=v_r$ for a state constrained problem in the case of compact state and control sets was proved in \cite{Lor87}.

\section{Optimal strategies for zero initial inventory} \label{sec:3}
\setcounter{equation}{0}
In this section we consider the case of zero initial inventory: $X_0=0$. For any constant $\widehat u\in Q\cap A$, the \emph{static strategy} $\alpha_t=q_t= \widehat u$ is admissible. If it is optimal, then
\begin{equation} \label{3.1}
\widehat u\in\mathscr M:=\arg\max_{u\in Q\cap A}\{R(u)-C(u)\}.
\end{equation}
For $\eta\in\mathbb R$ put
%\begin{equation} \label{3.2}
$$\mathscr M_R(\eta)=\arg\max_{q\in Q}\{R(q)-\eta q\},\qquad
  \mathscr M_C(\eta)=\arg\max_{\alpha\in A}\{\alpha\eta-C(\alpha)\},$$
%\end{equation}
and $\mathscr M_\eta=\mathscr M_R(\eta)\cap\mathscr M_C(\eta)$. Recall that $\zeta$ is the least minimum point of $H$.

\begin{theorem} \label{th:3}
The following conditions are equivalent.
\begin{itemize}
\item[(i)] A static strategy $\alpha_t=q_t= \widehat u\in\mathscr M$ is optimal.
\item[(ii)]  $\mathscr M_\eta\neq\emptyset$ for some $\eta\in\mathbb R$.
\item[(iii)]  $\mathscr M_\zeta\neq\emptyset$.
\end{itemize}
%Assume that $R\not\equiv 0$ and $C$ is strictly increasing. If a stationary strategy is not optimal, then there is no optimal strategy.
If $\mathscr M_\eta\neq\emptyset$, then $\eta$ is a minimum point of $H$ and $\mathscr M_\eta=\mathscr M$.
\end{theorem}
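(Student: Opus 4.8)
The plan is to establish the cycle (i)$\Rightarrow$(iii)$\Rightarrow$(ii)$\Rightarrow$(i), together with the final identifications, by exploiting the connection between the static value $R(u)-C(u)$, the Hamiltonian $H(z)=\widehat R(z)+\widehat C(z)$, and the value function $v$. The key elementary fact is that for $\widehat u\in Q\cap A$ and any $\eta$,
\begin{equation*}
R(\widehat u)-C(\widehat u)=\bigl(R(\widehat u)-\eta\widehat u\bigr)+\bigl(\widehat u\eta-C(\widehat u)\bigr)\le\widehat R(\eta)+\widehat C(\eta)=H(\eta),
\end{equation*}
with equality precisely when $\widehat u\in\mathscr M_R(\eta)\cap\mathscr M_C(\eta)=\mathscr M_\eta$. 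Hence $\max_{u\in Q\cap A}\{R(u)-C(u)\}\le\min_z H(z)=H(\zeta)$, and $\mathscr M_\eta\neq\emptyset$ forces $\eta$ to be a minimizer of $H$ with $R(u)-C(u)=H(\zeta)$ for $u\in\mathscr M_\eta$; this already yields ``$\eta$ is a minimum point of $H$'' and the inclusion $\mathscr M_\eta\subset\mathscr M$ in the last sentence. The reverse inclusion $\mathscr M\subset\mathscr M_\eta$ will follow once we know $\max_{u}\{R(u)-C(u)\}=H(\zeta)$ is actually attained: any maximizer $u^*$ satisfies $R(u^*)-C(u^*)=H(\eta)$, so the equality case forces $u^*\in\mathscr M_\eta$.

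For (i)$\Rightarrow$(iii): if the static strategy $\widehat u$ is optimal for $x=0$, then its discounted payoff $(R(\widehat u)-C(\widehat u))/\beta$ equals $v(0)$. By Theorem \ref{th:1}, $v(0)=H(\zeta)/\beta$ in both cases $\zeta=0$ (where $v\equiv H(0)/\beta$, noting $\zeta=0$ means $0\in\mathscr M_H$ so $H(0)=H(\zeta)$) and $\zeta>0$. Therefore $R(\widehat u)-C(\widehat u)=H(\zeta)$, and by the equality case of the inequality above (with $\eta=\zeta$) we get $\widehat u\in\mathscr M_\zeta$, so $\mathscr M_\zeta\neq\emptyset$. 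The implication (iii)$\Rightarrow$(ii) is trivial.

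For (ii)$\Rightarrow$(i): suppose $u\in\mathscr M_\eta$ for some $\eta$. As noted, $\eta$ minimizes $H$, so $H(\eta)=H(\zeta)$, and $R(u)-C(u)=H(\eta)=H(\zeta)$. Since $u\in Q\cap A$ the constant control $\alpha_t=q_t=u$ is admissible from $x=0$ with payoff $(R(u)-C(u))/\beta=H(\zeta)/\beta=v(0)$; hence it is optimal, and by the earlier remark $u\in\mathscr M$, so (i) holds (for this particular $\widehat u=u$; but since all elements of $\mathscr M$ give the same value $\max\{R-C\}$, if this maximum equals $H(\zeta)$ then every $\widehat u\in\mathscr M$ is optimal). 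The main obstacle is bookkeeping the case $\zeta=0$ cleanly — there $v'(0)$ need not exist in the form used for $\zeta>0$, and one must argue directly that $H(0)=H(\zeta)$ and that $\widehat R(z)\ge R(0)=0$, $\widehat C(z)\ge\alpha z-C(\alpha)$ give the bound $R(u)-C(u)\le H(\zeta)$ uniformly; this is exactly the computation already carried out in the proof of Lemma \ref{lem:2}, so it should go through smoothly. A secondary point to handle carefully is that ``static strategy optimal'' in (i) is stated for a generic $\widehat u\in\mathscr M$, so one should phrase the equivalence as: \emph{some} (equivalently \emph{every}) static strategy indexed by $\mathscr M$ is optimal iff $\max_{Q\cap A}\{R-C\}=H(\zeta)$ iff $\mathscr M_\zeta\neq\emptyset$.
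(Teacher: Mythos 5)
Your proposal is correct and follows essentially the same route as the paper: decompose $R(u)-C(u)=(R(u)-\eta u)+(\eta u-C(u))\le H(\eta)$, combine this with the identity $\beta v(0)=H(\zeta)$ from Theorem \ref{th:1} and admissibility of the static strategy, and use the equality case to identify $\mathscr M_\eta$ with $\mathscr M$ and $\eta$ with a minimizer of $H$. The only (cosmetic) difference is that you obtain ``$\eta$ minimizes $H$'' directly from the weak-duality inequality $\max_{Q\cap A}\{R-C\}\le\min H$, whereas the paper argues it by noting that a strict inequality in $H(\zeta)\le H(\eta)$ would contradict the optimality bound on $v(0)$; your handling of the $\zeta=0$ case is also consistent with Theorem \ref{th:1}.
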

\begin{proof}
By Theorem \ref{th:1} we have
\begin{equation} \label{3.3}
\beta v(0)=H(\zeta)\le H(\eta)=\sup_{q\in Q}\{R(q)-\eta q\}+\sup_{\alpha\in A}\{\alpha\eta-C(\alpha)\},\quad \eta\in\mathbb R.
\end{equation}

(ii) $\Longrightarrow$ (i). For any $\widehat u\in\mathscr M_\eta$ from (\ref{3.3}) we get
%If $\mathscr M_\eta\neq\emptyset$, then, by Lemma \ref{lem:4}, for any $\widehat u\in\mathscr M=\mathscr M_R(\eta)\cap\mathscr M_C(\eta)$ we have
\begin{equation} \label{3.4}
v(0)\le (R(\widehat u)-C(\widehat u))/\beta=\int_0^\infty e^{-\beta t}(R(\widehat u)-C(\widehat u))\,dt.
\end{equation}
Thus, $\alpha_t=q_t= \widehat u$ is an optimal strategy and $\widehat u\in\mathscr M$.

(i) $\Longrightarrow$ (iii). If a static strategy $\alpha_t=q_t= \widehat u$ is optimal, then
$$ R(\widehat u)-C(\widehat u)=\beta\int_0^\infty e^{-\beta t}(R(\widehat u)-C(\widehat u))\,dt=\beta v(0)=H(\zeta).$$
If $\widehat u\not\in\mathscr M_\zeta$, then we get a contradiction:
$$ H(\zeta)=\widehat R(\zeta)+\widehat C(\zeta)>R(\widehat u)-\zeta\widehat u+\zeta\widehat u-C(\widehat u)=R(\widehat u)-C(\widehat u).$$

(iii) $\Longrightarrow$ (ii) is evident.

Let $\mathscr M_\eta\neq\emptyset$. Note, that for $\widehat u\in\mathscr M_\eta$ we have $H(\eta)=R(\widehat u)-C(\widehat u)$.
It follows that the inequality in (\ref{3.3}) cannot be strict, since it would imply a strict inequality in (\ref{3.4}). Thus, if $\mathscr M_\eta\neq\emptyset$, then $H(\eta)=H(\zeta)$, and $\eta$ is a minimum point of $H$.

Let $\overline u\in\mathscr M_\eta$. Then
\begin{equation} \label{3.5}
 R(u)-\eta u\le R(\overline u)-\eta \overline u,\quad \eta u-C(u)\le \eta\overline u-C(\overline u)
\end{equation}
for any $u\in Q\cap A$. After summation we get
$$ R(u)-C(u)\le R(\overline u)-C(\overline u).$$
Hence, $\overline u\in\mathscr M$.

Now take some $u\in\mathscr M$. If $u\not\in\mathscr M_\eta$, then at least one of the inequalities (\ref{3.1}) is strict and after summation we get a contradiction with the definition of $u$:
$$  R(u)-C(u)< R(\overline u)-C(\overline u).$$
Thus, both inequalities (\ref{3.5}) are, in fact, equalities and $u\in M_\eta$ along with $\overline u$.
\end{proof}

To give an economical interpretation of the condition $\mathscr M_\eta\neq\emptyset$, assume that the firm consists of independent production and sales departments. The production department sales the good to the sales department at some \emph{shadow} price $\eta$ and gets the instantaneous revenue $\eta\alpha-C(\alpha)$. The sales department get the instantaneous revenue $R(q)-\eta q$ by selling the good at the market. For each possible price $\eta$ the departments try to choose optimal strategies $\widehat\alpha$, $\widehat q$. An equilibrium $\widehat\alpha=\widehat q$ corresponds to a shadow price $\eta$ with $\mathscr M_\eta\neq\emptyset$. By Theorem \ref{th:2} a static strategy is optimal exactly when such equilibrium exists.

Similarly to an equilibrium of a mechanical system, an equilibrium shadow price $\eta$ can be found as a minimum point of the Hamiltonian $H$. By Theorem \ref{th:1}, the least shadow price $\zeta$ coincides with the marginal indirect utility $v'(0)$ of zero inventory.

%An example with $\mathscr M_\eta=\emptyset$ will be considered in Section \ref{sec:4}.

For a function $f:\mathbb R\mapsto (-\infty,+\infty]$ denote by $\partial f$ its subdifferential at a point $z$:
$$ \partial f(z)=\{\gamma\in\mathbb R: f(x)\ge f(z)+\gamma (x-z),\ x\in\mathbb R\}.$$
For any lower semicontinuous convex function $f\not\equiv +\infty$ we have (see \cite[Theorem 23.5]{Roc70} or \cite[Proposition 11.3]{RockWets09})
\begin{equation} \label{3.6}
\arg\max_{x\in\mathbb R}\{zx-f(x)\}=\partial f^*(z).
\end{equation}

\begin{theorem} \label{th:4}
Assume that the sets $Q$, $A$ are convex, the function $R$ is concave and the function $C$ is  convex.
Then for any $\eta\in\mathscr M_H$ we have
$\mathscr M_\eta\neq\emptyset$.
Hence, a stationary strategy is optimal.
\end{theorem}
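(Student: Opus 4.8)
The plan is to verify condition (ii) of Theorem \ref{th:3} by exhibiting, for an arbitrary $\eta\in\mathscr M_H$, a common maximizer of $q\mapsto R(q)-\eta q$ over $Q$ and of $\alpha\mapsto\alpha\eta-C(\alpha)$ over $A$. The natural strategy is to use the convexity/concavity hypotheses to reduce the problem to a statement about subdifferentials. Since $C$ is convex and lower semicontinuous on the convex set $A$ (extended by $+\infty$ outside $A$), we have $C=\widetilde C=C^{**}$, and similarly $-R=(-R)^{**}$; thus formula (\ref{3.6}) applies directly to $C$ and to $-R$. The first step is therefore to rewrite
$$\mathscr M_C(\eta)=\arg\max_{\alpha\in A}\{\alpha\eta-C(\alpha)\}=\partial C^*(\eta)=\partial\widehat C(\eta),$$
and, taking care of the sign,
$$\mathscr M_R(\eta)=\arg\max_{q\in Q}\{R(q)-\eta q\}=\arg\max_{q}\{q(-\eta)-(-R)(q)\}=\partial(-R)^*(-\eta)=-\partial\widehat R(\eta),$$
where the last equality uses $\widehat R(z)=(-R)^*(-z)$ from (\ref{2.15}), so that $\partial\widehat R(\eta)=-\partial(-R)^*(-\eta)$.

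Next I would bring in the fact that $\eta$ minimizes the convex function $H=\widehat R+\widehat C$ on $\mathbb R$. The optimality condition for an unconstrained minimum of a finite convex function is $0\in\partial H(\eta)$, and since $\widehat R,\widehat C$ are everywhere finite convex functions, the Moreau--Rockafellar sum rule gives $\partial H(\eta)=\partial\widehat R(\eta)+\partial\widehat C(\eta)$. Hence $0\in\partial\widehat R(\eta)+\partial\widehat C(\eta)$, i.e. there exists $\gamma\in\partial\widehat C(\eta)$ with $-\gamma\in\partial\widehat R(\eta)$. By the two identifications above, $\gamma\in\mathscr M_C(\eta)$ and $\gamma\in -\partial\widehat R(\eta)=\mathscr M_R(\eta)$, so $\gamma\in\mathscr M_\eta$ and $\mathscr M_\eta\neq\emptyset$. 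Applying the implication (ii)$\Rightarrow$(i) of Theorem \ref{th:3} then yields optimality of the stationary strategy $\alpha_t=q_t=\gamma$.

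The only genuinely delicate point is the justification of the subdifferential sum rule and of the formula (\ref{3.6}) in the present setting: (\ref{3.6}) as quoted requires $f$ to be lower semicontinuous, convex, and not identically $+\infty$, so I must check that the extended $C$ and $-R$ have these properties. Lower semicontinuity follows from continuity of $C$ on the closed set $A$ (and of $R$ on the compact set $Q$) together with the convention of value $+\infty$ (resp. $-\infty$) off the set; convexity is the hypothesis; and $A,Q$ nonempty (from (\ref{2.2})) gives non-triviality. The sum rule $\partial(\widehat R+\widehat C)=\partial\widehat R+\partial\widehat C$ holds with no qualification because both summands are finite (hence continuous) convex functions on all of $\mathbb R$, as already noted after (\ref{2.5}); this is \cite[Theorem 23.8]{Roc70}. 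With these standard facts in place the argument is immediate, and the final sentence ``a stationary strategy is optimal'' is just Theorem \ref{th:3}(i).
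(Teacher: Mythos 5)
Your proof is correct and follows essentially the same route as the paper: both arguments apply the Moreau--Rockafellar sum rule to $0\in\partial H(\eta)$ at a minimum point, identify $\partial C^*(\eta)=\mathscr M_C(\eta)$ and $\partial(-R)^*(-\eta)=\mathscr M_R(\eta)$ via (\ref{3.6}), and extract a common element, concluding by Theorem \ref{th:3}. The only differences are cosmetic (you write the sum rule as $\partial\widehat R(\eta)+\partial\widehat C(\eta)$ rather than as a difference of subdifferentials, and you spell out the lower semicontinuity check that the paper leaves implicit).
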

\begin{proof}
From the definition of $R$, $C$ it follows that each set $\mathscr M_R(\eta)$, $\mathscr M_C(\eta)$ is nonempty for any $\eta$.
If $g(x)=f(-x)$, then $\partial g(x)=-\partial f(-x)$.
Using the Moreau-Rockafellar formula (see \cite[Theorem 23.8]{Roc70}  or \cite[Theorem 3.6.3]{AusTeb03}), from (\ref{2.16}) we get
\begin{equation} \label{3.7}
0\in\partial H(\eta)=\partial C^*(\eta)-\partial(-R)^*(-\eta)=\{x-y:x\in\partial C^*(\eta), y\in\partial(-R)^*(-\eta)\}
\end{equation}
at a minimum point $\eta$ of $H$. Furthermore, by (\ref{3.6}) we have,
%\begin{equation} \label{2.8}
$$\partial(-R)^*(-\eta)=\arg\max_{x\in\mathbb R}\{-x\eta-(-R(x))\}=\arg\max_{x\in\mathbb R} \{R(x)-\eta x\}
=\mathscr M_R(\eta),$$
%\end{equation}
%\begin{equation} \label{2.9}
$$\partial C^*(\eta)=\arg\max_{x\in\mathbb R}\{x\eta -C(x)\}=\mathscr M_C(\eta).$$
%\end{equation}
 Hence, the relation (\ref{3.7}) is equivalent to the condition $0\in\mathscr M_C(\eta)-\mathscr M_R(\eta)$, and
 $\mathscr M_\eta=\mathscr M_C(\eta)\cap\mathscr M_R(\eta)\neq\emptyset.$
\end{proof}

For a convex problem, considered in Theorem \ref{th:4}, optimality of a stationary strategy in a more direct way was proved in \cite{ChaJouTah03} (Proposition 1).

To give a simple illustration of this result consider the case of constant production cost:
$C(\alpha)=c>0$, $\alpha\in [0,\overline\alpha].$
It seems reasonable that only maximal production intensity $\alpha_t=\overline\alpha$ is optimal, since it gives an additional product for free, as comparing to any other production strategy. Let $Q=[0,\overline q]$, $\overline q>0$. A static strategy, maximizing the instantaneous profit flow, is the same as a strategy $\widehat q$, maximizing the concave revenue function $R(q)$ on $[0,\overline q\wedge\overline\alpha]$, where $\overline q\wedge\overline\alpha=\min\{\overline q,\overline\alpha\}$. Clearly, $\widehat q<\overline\alpha$, if $\overline q<\overline\alpha$. However, Theorem \ref{th:3} asserts that $\widehat\alpha=\widehat q$ are optimal.

The explanation of this ``paradox" is in the fact that an optimal selling rate $\widehat q$ is covered by any production rate $\alpha\ge\widehat q$. An additional product, produced by the maximal production rate $\overline\alpha$, is not sold by an optimal strategy $\widehat q$ and remains useless.

For a non-convex problem it is possible that $\mathscr M_\zeta=\emptyset$ and there is no optimal stationary strategy of the form (\ref{3.1}). A concrete example will be given in Section \ref{sec:6}. Our next goal is the description of optimal strategies in this case.

By Theorems \ref{th:3}, \ref{th:4} for the convexified problem (\ref{2.17}) a stationary strategy $\widetilde\alpha_t=\widetilde q_t=\widetilde u$,
\begin{equation} \label{3.8}
\widetilde u\in\arg\max\{\widetilde R(u)-\widetilde C(u):u\in\co Q\cap\co A\},
\end{equation}
is optimal for zero initial inventory, and
%\begin{equation} \label{2.9}
$$\widetilde v(0)=\frac{\widetilde R(\widetilde u)-\widetilde C(\widetilde u)}{\beta}.$$
%\end{equation}
Furthermore, by Lemma 4 there exists $\gamma\in (0,1)$, $\nu\in (0,1)$, $q^i\in Q$, $\alpha^i\in A$, $i=1,2$ such that
\begin{equation} \label{3.9}
\widetilde u=\gamma q^1+(1-\gamma) q^2=\nu\alpha^1+(1-\nu)\alpha^2,
\end{equation}
\begin{equation} \label{3.10}
\widetilde R(\widetilde u)=\gamma R(q^1)+(1-\gamma) R(q^2),\quad \widetilde C(\widetilde u)=\nu C(\alpha^1)+(1-\nu) C(\alpha^2).
\end{equation}
Consider the relaxed controls
\begin{equation} \label{3.11}
 \overline q_t(dx)=\gamma\delta_{q^1}(dx)+(1-\gamma)\delta_{q^2}(dx),\quad
 \overline \alpha_t(dx)=\nu\delta_{\alpha^1}(dx)+(1-\nu)\delta_{\alpha^2}(dx),
\end{equation}
where $\delta_a$ is the Dirac measure, concentrated at $a$. They are admissible for the relaxed problem (\ref{2.18}), since
$$ \int_0^t \int_Q x\overline q_s(dx)\,dt-\int_0^t \int_A x\overline\alpha_s(dx)\,dt=t(\gamma q^1+(1-\gamma) q^2-\nu\alpha^1-(1-\nu)\alpha^2)=0.$$
By virtue of Theorem \ref{th:2}, the following simple calculation shows that the strategy (\ref{3.11}) is optimal:
\begin{align*}
v_r(0)\ge &\int_0^\infty e^{-\beta t}\left(\int_Q R(x)\,\overline q_s(dx)-\int_A C(x)\,\overline \alpha_s(dx)\right)\,ds\nonumber\\
=&\int_0^\infty e^{-\beta t}\left(\widetilde R(\widetilde u)-\widetilde C(\widetilde u)\right)\,ds=\widetilde v(0).
\end{align*}

Thus, we obtain the following result.
\begin{theorem} \label{th:5}
%Assume that there is no initial inventory: $x=0$.
(i) Let $\widetilde C$, $\widetilde R$ be convex and concave hulls of $C$, $R$. Then $\widetilde u$, defined by (\ref{3.8}), is an optimal static strategy for the convexified problem (\ref{2.17}) with zero initial inventory.

(ii) There exist $q^i\in Q$, $\alpha^i\in A$, $i=1,2$ and $\gamma\in (0,1)$, $\nu\in (0,1)$ such that (\ref{3.9}), (\ref{3.10}) holds true. The relaxed static strategy (\ref{3.11}) gives the solution of the problem (\ref{2.18}) with zero initial inventory.
%$$ v_r(0)=\int_0^\infty e^{-\beta t}\left(\int_Q R(y)\,\widehat q(dy)-\int_A C(y)\,\widehat\alpha(dy)\right)dt.$$
\end{theorem}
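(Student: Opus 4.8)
The plan is to establish Theorem~\ref{th:5} by combining the convexification identity $v=\widetilde v$ (Theorem~\ref{th:2}), the equivalence results of Theorem~\ref{th:3}, the convex optimality criterion of Theorem~\ref{th:4}, and the extreme-point decomposition furnished by Lemma~\ref{lem:4}. Part~(i) is almost immediate: since $\widetilde R$ is concave, $\widetilde C$ is convex, and $\co Q$, $\co A$ are convex (as convex hulls of subsets of $\mathbb R$), the convexified problem (\ref{2.17}) satisfies the hypotheses of Theorem~\ref{th:4}. Hence a static strategy $\widetilde\alpha_t=\widetilde q_t=\widetilde u$ with $\widetilde u\in\mathscr M$ (for the convexified data) is optimal, and by the equivalence (i)$\Leftrightarrow$(iii) of Theorem~\ref{th:3} applied to the convexified problem, $\widetilde u$ may be taken as in (\ref{3.8}); the value is $\widetilde v(0)=(\widetilde R(\widetilde u)-\widetilde C(\widetilde u))/\beta$. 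One must check that the maximum in (\ref{3.8}) is attained, which follows from continuity of $\widetilde R-\widetilde C$ on the compact set $\co Q\cap\co A$ (this intersection is nonempty because $0\in Q\cap A$).

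For part~(ii), I would first apply Lemma~\ref{lem:4} separately to $C$ (with $G=A$) and to $-R$ (with $G=Q$). Since $C$ is $1$-coercive (by (\ref{2.3}), or trivially if $A$ is bounded, after intersecting with a large interval as in the proof of Theorem~\ref{th:1}) and $-R$ is continuous on the compact set $Q$, the lemma gives, for the point $\widetilde u\in\co A\cap\co Q$, a representation $\widetilde u=\nu\alpha^1+(1-\nu)\alpha^2$ with $\alpha^i\in A$, $\nu\in(0,1)$ and $\widetilde C(\widetilde u)=\nu C(\alpha^1)+(1-\nu)C(\alpha^2)$, and similarly $\widetilde u=\gamma q^1+(1-\gamma)q^2$ with $q^i\in Q$, $\gamma\in(0,1)$ and $\widetilde R(\widetilde u)=\gamma R(q^1)+(1-\gamma)R(q^2)$ (translating the statement about $\co(-R)$ into one about $\widetilde R=-\co(-R)$). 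This is exactly (\ref{3.9})--(\ref{3.10}). Then I would verify that the relaxed controls (\ref{3.11}) are admissible: the inventory process stays identically zero because the net drift $\int_Q x\,\overline q_s(dx)-\int_A x\,\overline\alpha_s(dx)=(\gamma q^1+(1-\gamma)q^2)-(\nu\alpha^1+(1-\nu)\alpha^2)=0$ by (\ref{3.9}), so $X_t\equiv X_0=0\ge0$.

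Finally, I would compute the payoff of (\ref{3.11}) in the relaxed problem: by linearity of the integrals against the two-point measures,
$$\int_0^\infty e^{-\beta t}\Bigl(\int_Q R(x)\,\overline q_s(dx)-\int_A C(x)\,\overline\alpha_s(dx)\Bigr)\,ds=\frac{(\gamma R(q^1)+(1-\gamma)R(q^2))-(\nu C(\alpha^1)+(1-\nu)C(\alpha^2))}{\beta}=\frac{\widetilde R(\widetilde u)-\widetilde C(\widetilde u)}{\beta}=\widetilde v(0),$$
using (\ref{3.10}) and part~(i). Since $v_r(0)=\widetilde v(0)$ by Theorem~\ref{th:2}, the relaxed strategy (\ref{3.11}) attains the optimal value and is therefore optimal. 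The only genuinely delicate point is the coercivity needed to invoke Lemma~\ref{lem:4} for $C$ when $A$ is unbounded; I would handle it by the same truncation argument used in the proof of Theorem~\ref{th:1}, replacing $A$ by $A\cap[0,c]$ for $c$ large (which changes neither $\widehat C$ on the relevant range nor $\widetilde C$ near $\widetilde u$, hence neither the Hamiltonian nor the value function), so that all data become compactly supported and the lemma applies verbatim. Everything else is a routine bookkeeping of the Jensen-type estimates already established before Theorem~\ref{th:2}.
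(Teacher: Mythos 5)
Your proof is correct and follows essentially the same route as the paper: part (i) by applying Theorems \ref{th:3} and \ref{th:4} to the convexified data, and part (ii) via the extreme-point decomposition of Lemma \ref{lem:4}, the zero-net-drift admissibility check for (\ref{3.11}), and the identity $v_r=\widetilde v$ from Theorem \ref{th:2}. The extra care you take with attainment of the maximum in (\ref{3.8}) and with coercivity of $C$ is fine but not needed beyond what the standing assumption (\ref{2.3}) already guarantees for Lemma \ref{lem:4}.
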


The strategy (\ref{3.11}) distributes the price (resp., production rate) between two levels, which are determined by the value of $\widehat u$, calculated for the convexified problem, and the location of $R$ (resp., $C$) relative to its concave (resp., convex) hull.

The randomization of production and pricing strategies, assuming by relaxed controls, can hardly be realized in practice. So, it makes sense to construct an ordinary approximately optimal strategy $(\alpha^\varepsilon,q^\varepsilon)\in\mathscr A(0)$. We will use the following elaboration of Lemma \ref{lem:4}: see \cite{HirUrrLem93} (Chapter X, Theorems 1.5.5, 1.5.6).
\begin{lemma} \label{lem:5}
Suppose that the assumptions of Lemma \ref{lem:4} are satisfied. If for given $x\in\co G$ the points $x_1$, $x_2$ in (\ref{2.14}) are different, then $\co f$ is affine on $(x_1,x_2)$ and, in addition to (\ref{2.14}), the equalities
$$ (\co f)(x_i)=f(x_i),\quad s x-(\co f)(x)=s x_i-f(x_i),\quad s\in\partial(\co f)(x),\quad i=1,2$$
hold true.
\end{lemma}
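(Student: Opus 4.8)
The plan is to exploit the two defining properties of the convex hull recalled in Lemma~\ref{lem:4} — that $\co f=f^{**}$ lies pointwise below $f$ and is convex and finite on $\co G$ — together with a single supporting affine function at $x$, and then to squeeze a chain of inequalities until every inequality collapses to an equality.

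First I would note that, since $x_1\neq x_2$ both lie in $\dom(\co f)=\co G$ and $x=\delta x_1+(1-\delta)x_2$ with $\delta\in(0,1)$, the point $x$ lies in the interior of the interval $\co G$; hence the finite convex function $\co f$ admits a nonempty subdifferential there. Fix any $s\in\partial(\co f)(x)$ and set $\ell(y)=(\co f)(x)+s(y-x)$, so that $\ell\le\co f$ on $\mathbb R$, $\ell(x)=(\co f)(x)$, and $\ell$ is affine.

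Next comes the squeeze. Using affinity of $\ell$, then $\ell\le\co f$, then $\co f\le f$, and finally the second relation in (\ref{2.14}), one obtains
$$(\co f)(x)=\ell(x)=\delta\ell(x_1)+(1-\delta)\ell(x_2)\le\delta(\co f)(x_1)+(1-\delta)(\co f)(x_2)\le\delta f(x_1)+(1-\delta)f(x_2)=(\co f)(x).$$
Hence both inequalities are in fact equalities. Since $\delta,1-\delta>0$ and the summands in each sum are ordered termwise ($\ell(x_i)\le(\co f)(x_i)$ in the first, $(\co f)(x_i)\le f(x_i)$ in the second), equality of the sums forces equality term by term: $\ell(x_i)=(\co f)(x_i)=f(x_i)$ for $i=1,2$. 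This already yields the asserted equalities $(\co f)(x_i)=f(x_i)$; and, rewriting $\ell(x_i)=(\co f)(x_i)$ as $(\co f)(x)+s(x_i-x)=f(x_i)$, it yields the identity $sx_i-f(x_i)=sx-(\co f)(x)$. As $s$ was an arbitrary element of $\partial(\co f)(x)$, the last claim holds for every such $s$.

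Finally, for affineness: given $y=\lambda x_1+(1-\lambda)x_2$ with $\lambda\in(0,1)$, convexity of $\co f$ together with the equalities just obtained gives $(\co f)(y)\le\lambda(\co f)(x_1)+(1-\lambda)(\co f)(x_2)=\lambda\ell(x_1)+(1-\lambda)\ell(x_2)=\ell(y)\le(\co f)(y)$, so $\co f=\ell$ on $[x_1,x_2]$, i.e. $\co f$ is affine on $(x_1,x_2)$ (indeed on the closed segment). The only step requiring a little care is the very first one — that $x$ is interior to $\co G$, so that a subgradient at $x$ exists — and this is precisely where the hypothesis $x_1\neq x_2$ enters; everything afterwards is the routine ``equality in Jensen forces affinity'' mechanism.
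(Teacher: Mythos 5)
Your proof is correct. Note that the paper does not actually prove this lemma: it imports it directly from \cite{HirUrrLem93} (Chapter X, Theorems 1.5.5, 1.5.6), so there is no in-paper argument to compare against. What you have written is essentially the standard proof of that cited result: the observation that $x_1\neq x_2$ places $x$ in the interior of $\co G=\dom(\co f)$ and hence guarantees a subgradient, followed by the squeeze
$$(\co f)(x)=\ell(x)=\delta\ell(x_1)+(1-\delta)\ell(x_2)\le\delta(\co f)(x_1)+(1-\delta)(\co f)(x_2)\le\delta f(x_1)+(1-\delta)f(x_2)=(\co f)(x),$$
which collapses termwise because $\delta,1-\delta>0$ and the summands are ordered. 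Each ingredient you use is available from Lemma \ref{lem:4} ($\co f=f^{**}\le f$, $\dom(\co f)=\co G$, the decomposition (\ref{2.14})), the termwise-equality step is justified correctly, and the affinity of $\co f$ on $[x_1,x_2]$ follows cleanly from $\ell\le\co f\le\ell$ there. The argument is complete and self-contained; it in fact upgrades the paper, which relies on an external reference at this point.
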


For $\widetilde u$, defined by (\ref{3.8}), by Theorems \ref{th:3}, \ref{th:4} we have
$$ \widetilde u\in\arg\max_{q\in \co Q} \{\widetilde R(q)-q\zeta\},\quad
   \widetilde u\in\arg\max_{\alpha\in\co A}\{\alpha\zeta-\widetilde C(\alpha)\}.$$
It follows that $\zeta\in\partial\widetilde C(\widetilde u)$ and, by Lemma \ref{lem:5}, there exist $\nu\in (0,1)$, $\alpha^i\in A$, $i=1,2$, satisfying (\ref{3.9}) such that
\begin{equation} \label{3.12}
 \alpha^i\zeta-C(\alpha^i)=\widetilde u\zeta-\widetilde C(\widetilde u),\quad i=1,2.
\end{equation}
Similarly, there exist $\gamma\in (0,1)$, $q^i\in Q$, $i=1,2$, satisfying (\ref{3.9}) such that
\begin{equation} \label{3.13}
 R(q^i)-q^i\zeta=\widetilde R(\widetilde u)-\widetilde u\zeta,\quad i=1,2.
\end{equation}
From (\ref{3.12}), (\ref{3.13}) for any $\varkappa\in [0,1]$ we get
\begin{align} \label{3.14}
\widetilde R(\widetilde u)-\widetilde C(\widetilde u) &= \varkappa (R(q^1)-C(\alpha^2))+(1-\varkappa)(R(q^2)-C(\alpha^1))\nonumber\\
 &+(\varkappa(\alpha^2-q^1)+(1-\varkappa)(\alpha^1-q^2))\zeta.
\end{align}

We can assume that  $\alpha^2\ge\alpha^1$, $q^2\ge q^1$ and either $\alpha^2>\alpha^1$ or $q^2>q^1$, since otherwise $\alpha^1=\alpha^2=q^1=q^2=\widetilde u$, and the ordinary stationary strategy $\alpha_t=q_t=\widetilde u$ is optimal. Put $\tau_i=\varepsilon i$, $i\in\mathbb Z_+=\{0,1,\dots\}$ and
\begin{equation} \label{3.15}
\alpha_t^\varepsilon=\sum_{i=0}^\infty\left( \alpha^2 I_{[\tau_i,\tau_i+\varkappa\varepsilon)}(t)+\alpha^1 I_{[\tau_i+\varkappa\varepsilon,\tau_{i+1})}(t)\right),
\end{equation}
\begin{equation} \label{3.16}
q^\varepsilon_t=\sum_{i=0}^\infty \left( q^1 I_{[\tau_i,\tau_i+\varkappa\varepsilon)}(t)+q^2 I_{[\tau_i+\varkappa\varepsilon,\tau_{i+1})}(t)\right),
\end{equation}
$$ \varkappa=\frac{q^2-\alpha^1}{q^2-\alpha^1+\alpha^2-q^1}\in (0,1).$$
The function $\int_{\tau_i}^t(\alpha_s^\varepsilon - q^\varepsilon_s)\,ds$ is increasing on $[\tau_i,\tau_i+\varkappa\varepsilon)$ and decreasing on $[\tau_i+\varkappa\varepsilon,\tau_{i+1})$. It is non-negative on $[\tau_i,\tau_{i+1}]$, since
$$\int_{\tau_i}^{\tau_{i+1}}(\alpha_s^\varepsilon - q^\varepsilon_s)\,ds=\varkappa\varepsilon(\alpha^2-q^1)+(1-\varkappa)\varepsilon(\alpha^1-q^2)=0$$
by the definition of $\varkappa$. We see that $(\alpha^\varepsilon,q^\varepsilon)\in\mathscr A(0)$.

Furthermore,
\begin{align*}
 \int_{\tau_i}^{\tau_{i+1}}(R(q_t^\varepsilon)-C(\alpha_t^\varepsilon))\,dt &=\frac{R(q^1)-C(\alpha^2)}{\beta} \left(e^{-\beta\tau_i}-e^{-\beta(\tau_i+\varkappa\varepsilon)}\right)\\
 &+\frac{R(q^2)-C(\alpha^1)}{\beta} \left(e^{-\beta(\tau_i+\varkappa\varepsilon)}-e^{-\beta\tau_{i+1}}\right).
\end{align*}
Summing up these expressions, we get
\begin{align*}
 &\lim_{\varepsilon\to 0}\int_0^\infty e^{-\beta t} (R(q_t^\varepsilon)-C(\alpha_t^\varepsilon))\,dt =\lim_{\varepsilon\to 0}\frac{1}{1-e^{-\beta\varepsilon}}  \biggl( \frac{R(q^1)-C(\alpha^2)}{\beta} \left(1-e^{-\beta\varkappa\varepsilon}\right)\nonumber\\
 &+\frac{R(q^2)-C(\alpha^1)}{\beta} \left(e^{-\beta\varkappa\varepsilon}-e^{-\beta\varepsilon}\right)\biggr)\nonumber\\
 &=\frac{R(q^1)-C(\alpha^2)}{\beta}\varkappa+\frac{R(q^2)-C(\alpha^1)}{\beta}(1-\varkappa)=\frac{\widetilde R(\widetilde u)-\widetilde C(\widetilde u)}{\beta}=\widetilde v(0),
\end{align*}
where we used (\ref{3.14}) in the next to last equality.

We see that the strategy (\ref{3.15}), (\ref{3.16}) is approximately optimal:
$$ v(0)=\lim_{\varepsilon\to 0}\int_0^\infty e^{-\beta t}(R(q_t^\varepsilon)-C(\alpha_t^\varepsilon))\,dt. $$
Note also, that under this strategy the inventory level $X_t$ demonstrates cyclic behavior and $0\le X_t\le\varkappa(\alpha^2-q^1)\varepsilon$.

\section{Optimal strategies for positive initial inventory} \label{sec:4}
\setcounter{equation}{0}
A complete description of optimal strategies are given in Theorem \ref{th:6} below. In its proof we use the following result.
\begin{lemma} \label{lem:6}
Let $f$ satisfy the assumptions of Lemma \ref{lem:4}, and let $F$ be a co-countable set, where the function $f^*$ is differentiable. In view of (\ref{3.6}) put
$$ \{\widehat x(z)\}=\{(f^*)'(z)\}=\arg\max_{x\in\mathbb R}\{xz-\co f(x)\},\quad z\in F.$$
Then
\begin{equation} \label{4.1}
\widehat x(z)\in\dom f,\quad \co f(\widehat x(z))=f(\widehat x(z))
\end{equation}
for $z\in F$.
\end{lemma}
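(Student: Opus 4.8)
\textbf{Proof plan for Lemma \ref{lem:6}.}

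The plan is to exploit the exact-representation property of $\co f = f^{**}$ supplied by Lemma \ref{lem:4} together with the maximal-monotone structure of $\partial f^*$. First I would fix $z\in F$. Since $f$ is $1$-coercive and continuous on the closed set $G=\dom f$, Lemma \ref{lem:4} gives $\co f = f^{**}$ with $\dom(\co f)=\co G$, and the supremum defining $(\co f)^*(z)=f^*(z)=\sup_x\{xz-\co f(x)\}$ is attained; by (\ref{3.6}) the set of maximizers is exactly $\partial f^*(z)$, which by the assumed differentiability at $z\in F$ is the singleton $\{\widehat x(z)\}=\{(f^*)'(z)\}$. In particular $\widehat x(z)\in\dom(\co f)=\co G$, so $\widehat x(z)\in\co G$ — but this is weaker than the claimed $\widehat x(z)\in\dom f=G$, and closing that gap is the crux of the argument.

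To close the gap I would apply the refinement in Lemma \ref{lem:4}: for the point $x:=\widehat x(z)\in\co G$ there exist $x_1,x_2\in G$ and $\delta\in(0,1)$ with $x=\delta x_1+(1-\delta)x_2$ and $(\co f)(x)=\delta f(x_1)+(1-\delta)f(x_2)$. I claim $x_1=x_2$, which forces $x=x_1=x_2\in G$ and then $(\co f)(x)=f(x)$ immediately, giving (\ref{4.1}). Suppose instead $x_1\neq x_2$; then Lemma \ref{lem:5} applies and $\co f$ is affine on $(x_1,x_2)$ with slope $s$ for every $s\in\partial(\co f)(x)$. Because $z$ is a maximizer in $f^*(z)=\sup_y\{yz-\co f(y)\}$, we have $z\in\partial(\co f)(x)$, so $\co f$ has slope $z$ throughout $[x_1,x_2]$; hence every point of $[x_1,x_2]$ maximizes $y\mapsto yz-\co f(y)$, i.e. $[x_1,x_2]\subset\partial f^*(z)$. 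But $\partial f^*(z)=\{\widehat x(z)\}$ is a single point, contradicting $x_1<x_2$. Therefore $x_1=x_2$ as claimed.

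The main obstacle is precisely this last dichotomy: one must rule out the degenerate ``flat-piece'' case where the optimizer $\widehat x(z)$ of the convexified objective lands strictly inside a segment along which $\co f$ is affine and strictly below $f$. Differentiability of $f^*$ at $z$ is exactly what kills this case, since a nondegenerate flat piece of $\co f$ at slope $z$ would correspond to $\partial f^*(z)$ being a nondegenerate interval, i.e. to a kink of $f^*$ at $z$ — excluded for $z\in F$. Everything else (attainment of the sup, the identification of maximizer sets via (\ref{3.6}), the passage $\dom(\co f)=\co G$) is routine convex analysis already recorded in Lemmas \ref{lem:4} and \ref{lem:5}. I would finish by noting that $F$ is co-countable because $f^*$ is a finite convex function on $\mathbb R$ (finiteness from $1$-coercivity of $f$) and hence differentiable off an at most countable set, so the conclusion (\ref{4.1}) holds for all but countably many $z$, which is all that is needed in the sequel.
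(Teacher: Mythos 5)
Your argument is correct and is essentially the paper's own proof, just written out in more detail: both reduce to the observation that if $\widehat x(z)$ were not an exposed point of $\co f$ (i.e.\ if $x_1\neq x_2$ in the Lemma~\ref{lem:4} representation), then Lemma~\ref{lem:5} would make $\co f$ affine with slope $z$ on a nondegenerate interval, so $\arg\max_x\{xz-\co f(x)\}=\partial f^*(z)$ would not be a singleton, contradicting differentiability of $f^*$ at $z\in F$. The only quibble is the phrase ``because $z$ is a maximizer'' — you mean that $\widehat x(z)$ is the maximizer, whence $z\in\partial(\co f)(\widehat x(z))$ by Fenchel--Young equality; the conclusion you draw from it is right.
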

\begin{proof}
%Since the function $(\co f)^*(z)$ is convex, it is differentiable on a co-countable set $F$. By (\ref{3.6}) we conclude that $\widehat x(z)$ is uniquely defined on $F$. But
If (\ref{4.1}) is not true, then from Lemma \ref{lem:5} it follows that $\co f$ is affine in a neighbourhood of $\widehat x(z)$. Hence, the set $\arg\max_{x\in\mathbb R}\{xz-\co f(x)\}$ contains this neighbourhood, and $z\not\in F$.
\end{proof}

If $\zeta=0$, then, by Theorem \ref{th:1}, $v$ is constant. This case is somewhat trivial, since an optimal strategy for zero initial inventory $X_0=0$ retains this property for $X_0>0$. Thus, we will assume that $\zeta>0$.

The convex functions $\widehat R$, $\widehat C$ are differentiable on a co-countable subset $F$ of $(0,\zeta)$. Hence, by (\ref{3.6}) we conclude that each of the sets
$$ \widetilde{\mathscr M}_R(z)=\arg\max_{q\in \co Q} \{\widetilde R(q)-qz\},\quad
   \widetilde{\mathscr M}_C(z)=\arg\max_{\alpha\in\co A}\{\alpha z-\widetilde C(\alpha)\}$$
contains exactly one point:
\begin{align}
\widetilde{\mathscr M}_R(z)&=\arg\max_{q\in \co Q}(\widetilde R(q)-zq\}=\arg\max_{x\in\mathbb R}(-zx-(-\widetilde R(x))=\partial (-\widetilde R)^*(-z)\nonumber\\
&=-\partial\widehat R(z)=\{-\widehat R'(z)\},\label{4.2}\\
\widetilde{\mathscr M}_C(z)&=\arg\max_{\alpha\in \co A}(z\alpha-\widetilde C(\alpha)\}=\partial C^*(z)=\{\widehat C'(z)\}\nonumber
\end{align}
for $z\in F$. In (\ref{4.2}) we used the equality $\widehat R(z)=(-R)^*(-z)=(-\widetilde R)^*(-z)$: see (\ref{2.15}).

\begin{theorem} \label{th:6}
Let $F\subset (0,\zeta)$ be a co-countable set, where the convex functions $\widehat R$, $\widehat C$ are differentiable. Put
$$ \{\widehat q(z)\}=\arg\max_{q\in \co Q} \{\widetilde R(q)-q z\},\quad
   \{\widehat\alpha(z)\}=\arg\max_{\alpha\in\co A}\{\alpha z-\widetilde C(\alpha)\},\quad z\in F,$$
$$\widehat u\in\arg\max(\widetilde R(u)-\widetilde C(u):u\in\co Q\cap\co A).$$
For given initial inventory $x>0$ put
$$\tau=\frac{1}{\beta}\ln\frac{v'(0)}{v'(x)}$$
and define $X$ by the equation
\begin{equation} \label{4.3}
v'(X_t)=v'(x)e^{\beta t},\quad t\in [0,\tau].
\end{equation}
Furthermore, put $\mathscr T=\{t\in [0,\tau]:v'(X_t)\in F\}$ and consider the strategy
 \begin{equation} \label{4.4}
 \alpha^*_t=\widehat\alpha(v'(X_t)),\quad q^*_t=\widehat q(v'(X_t)),\quad t\in\mathscr T,
\end{equation}
\begin{equation} \label{4.5}
 \alpha^*_t=q^*_t=\widehat u,\quad t>\tau.
\end{equation}
On the countable set $[0,\tau]\backslash \mathscr T$ the values $\alpha^*_t$, $q_t^*$ can be defined in an arbitrary way.

(i) The strategy (\ref{4.4}) is optimal for the convexified problem (\ref{2.17}).

(ii) We have
\begin{equation} \label{4.6}
(\alpha_t^*,q_t^*)\in\dom C\times\dom R,\quad \widetilde C(\alpha^*_t)=C(\alpha^*_t),\quad \widetilde R(\alpha_t^*)=R(\alpha_t^*),\quad t\in\mathscr T.
\end{equation}

(iii) There exist $q^i\in Q$, $\alpha^i\in A$, $i=1,2$ and $\gamma\in (0,1)$, $\nu\in (0,1)$ such that (\ref{3.9}), (\ref{3.10}) hold true. Replacing (\ref{4.5}) by the static relaxed control
\begin{equation} \label{4.7}
\overline q(dx)=\gamma\delta_{q^1}(dx)+(1-\gamma)\delta_{q^2}(dx),\quad
 \overline \alpha(dx)=\nu\delta_{\alpha^1}(dx)+(1-\nu)\delta_{\alpha^2}(dx),
\end{equation}
we get the solution of the relaxed problem (\ref{2.18}).

(iv) If $\mathscr M_\zeta\neq\emptyset$, then replacing (\ref{4.5}) by
\begin{equation} \label{4.8}
\widehat u\in\arg\max_{u\in Q\cap A}(R(u)-C(u)),
\end{equation}
we get an  optimal solution of the problem (\ref{2.4}).
\end{theorem}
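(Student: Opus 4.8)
My plan is to handle the decreasing stage $[0,\tau]$ and the static stage $(\tau,\infty)$ separately. Throughout $\zeta>0$ is in force (the case $\zeta=0$ is trivial, as noted before the statement), so by Theorem~\ref{th:1} the function $v\in C^1(\mathbb R_+)$ is strictly increasing and strictly concave, $v'=\xi$ is an absolutely continuous, strictly decreasing bijection of $\mathbb R_+$ onto $(0,\zeta]$ with $v'(0)=\zeta$ and $v''<0$ a.e., and the identity that follows from (\ref{2.11}),
\[ v''(y)\,H'(v'(y))=\beta\,v'(y),\qquad\text{a.e. }y>0, \]
holds. The first and main step is to check that (\ref{4.3}) defines a genuine admissible trajectory on $[0,\tau]$. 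Since $t\mapsto v'(x)e^{\beta t}$ runs monotonically through the compact interval $[v'(x),\zeta]\subset(0,\zeta]$, injectivity of $v'$ makes $X_t$ well defined, with $X_0=x$ and $X_\tau=0$. Because $v''\neq0$ a.e.\ on $(0,x)$, Lemma~\ref{lem:3} shows $(v')^{-1}$ is absolutely continuous on $[v'(x),\zeta]$, hence $X_t=(v')^{-1}(v'(x)e^{\beta t})$ is absolutely continuous on $[0,\tau]$; differentiating (\ref{4.3}) by the chain rule and using the displayed identity gives $\dot X_t=\beta v'(X_t)/v''(X_t)=H'(v'(X_t))$ for a.e.\ $t\in(0,\tau)$. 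On the other hand, by (\ref{4.2}) together with $H=\widehat R+\widehat C$ we have $\widehat q(z)=-\widehat R'(z)$, $\widehat\alpha(z)=\widehat C'(z)$, so $\widehat\alpha(z)-\widehat q(z)=H'(z)$ for $z\in F$; since $t\mapsto v'(X_t)$ is a homeomorphism of $[0,\tau]$ onto $[v'(x),\zeta]$, the complement $[0,\tau]\setminus\mathscr T$ is countable, and therefore $\alpha^*_t-q^*_t=H'(v'(X_t))=\dot X_t$ a.e.\ on $(0,\tau)$, i.e.\ $X_t=x+\int_0^t(\alpha^*_s-q^*_s)\,ds$. Moreover $H'<0$ a.e.\ on $(0,\zeta)$, so $X$ is strictly decreasing and stays non-negative; for $t>\tau$ the controls (\ref{4.5}) keep $X_t\equiv0$. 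Finally $\widehat\alpha,\widehat q$ are monotone (being $\pm$ derivatives of convex functions), hence Borel, and composing with the continuous map $t\mapsto v'(X_t)$ keeps $\alpha^*,q^*$ Borel measurable; the values on the countable set $[0,\tau]\setminus\mathscr T$ are irrelevant.

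For part (i) I would run the standard verification argument, as in the proof of Theorem~\ref{th:1} but with $\widetilde R,\widetilde C,\co Q,\co A$ in place of $R,C,Q,A$ and with representation (\ref{2.16}) of the Hamiltonian. The point is that along the constructed trajectory the controls attain the two suprema in $\beta v(X_t)=H(v'(X_t))=\sup_{q\in\co Q}\{\widetilde R(q)-q\,v'(X_t)\}+\sup_{\alpha\in\co A}\{\alpha\,v'(X_t)-\widetilde C(\alpha)\}$ for a.e.\ $t$: on $\mathscr T$ this is precisely (\ref{4.2}) with $z=v'(X_t)\in F$, and for $t>\tau$ it is the fact, established before Theorem~\ref{th:5}, that $\widehat u$ maximizes both $q\mapsto\widetilde R(q)-\zeta q$ over $\co Q$ and $\alpha\mapsto\zeta\alpha-\widetilde C(\alpha)$ over $\co A$ (recall $v'(0)=\zeta$). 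Hence $-\frac{d}{dt}\bigl(e^{-\beta t}v(X_t)\bigr)=e^{-\beta t}\bigl(\widetilde R(q^*_t)-\widetilde C(\alpha^*_t)\bigr)$ a.e., and since $t\mapsto v(X_t)$ is absolutely continuous, integrating over $[0,T]$ and letting $T\to\infty$ (boundedness of $v$) yields $v(x)=\int_0^\infty e^{-\beta t}\bigl(\widetilde R(q^*_t)-\widetilde C(\alpha^*_t)\bigr)\,dt$. Since $v=\widetilde v$ by Theorem~\ref{th:2}, the strategy is optimal for (\ref{2.17}).

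Part (ii) is Lemma~\ref{lem:6}, applied to $f=C$ (so $f^*=\widehat C$, $(f^*)'=\widehat\alpha$, $\co f=\widetilde C$) and to $f=-R$ (so $f^*(w)=\widehat R(-w)$, $(f^*)'(-z)=\widehat q(z)$ via (\ref{4.2}), $\co f=-\widetilde R$), at the points $z=v'(X_t)\in F$, $t\in\mathscr T$; it gives at once $\alpha^*_t\in\dom C$, $q^*_t\in\dom R$ and $\widetilde C(\alpha^*_t)=C(\alpha^*_t)$, $\widetilde R(q^*_t)=R(q^*_t)$. For (iii), Theorem~\ref{th:5}(ii) (or Lemma~\ref{lem:4}) provides $q^i\in Q$, $\alpha^i\in A$ and $\gamma,\nu\in(0,1)$ satisfying (\ref{3.9}), (\ref{3.10}); the static relaxed control (\ref{4.7}) is admissible, keeps $X_t=0$, and its instantaneous profit is $\gamma R(q^1)+(1-\gamma)R(q^2)-\nu C(\alpha^1)-(1-\nu)C(\alpha^2)=\widetilde R(\widehat u)-\widetilde C(\widehat u)$, the same integrand (\ref{4.5}) contributes after $\tau$, while on $[0,\tau]$ part (ii) identifies the integrand $R(q^*_t)-C(\alpha^*_t)$ with $\widetilde R(q^*_t)-\widetilde C(\alpha^*_t)$; hence the relaxed objective of (\ref{4.4}) followed by (\ref{4.7}) equals the convexified objective of (\ref{4.4})--(\ref{4.5}), which is $v(x)=v_r(x)$ by part (i) and Theorem~\ref{th:2}. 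For (iv), if $\mathscr M_\zeta\neq\emptyset$ then Theorem~\ref{th:3} furnishes the maximizer $\widehat u$ in (\ref{4.8}), with $R(\widehat u)-C(\widehat u)=H(\zeta)=\beta\widetilde v(0)$; combining (\ref{4.4}) with (\ref{4.8}), which again keeps $X_t=0$, and using (ii) on $[0,\tau]$, the objective of (\ref{2.4}) along this strategy equals the convexified objective of (\ref{4.4})--(\ref{4.5}), i.e.\ $v(x)$; by (ii) all controls take values in $A$ and $Q$, so the strategy is admissible and optimal for (\ref{2.4}).

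The main obstacle is the first step: producing, from a value function that is only a.e.\ twice differentiable, a bona fide admissible control trajectory. The delicate points are that $X$ defined implicitly by (\ref{4.3}) is absolutely continuous — which needs Lemma~\ref{lem:3} and $v''\neq0$ a.e.\ — and that its a.e.\ derivative, computed by the chain rule as $\beta v'(X_t)/v''(X_t)$, coincides with $\alpha^*_t-q^*_t$; this equality rests entirely on the identity $v''(y)H'(v'(y))=\beta v'(y)$ extracted from (\ref{2.11}) and on matching $\widehat\alpha-\widehat q$ with $H'$ through (\ref{4.2}). Once the trajectory is secured, (i) is a routine verification argument and (ii)--(iv) are bookkeeping with Lemmas~\ref{lem:4}--\ref{lem:6} and Theorems~\ref{th:2}, \ref{th:3}, \ref{th:5}.
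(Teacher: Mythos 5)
Your proposal is correct and follows essentially the same route as the paper: construct the trajectory from (\ref{4.3}) using the a.e.\ identity $\beta v'=H'(v')v''$, Lemma~\ref{lem:3} for absolute continuity, and $\widehat\alpha-\widehat q=H'$ on $F$; then verify optimality by showing $t\mapsto\int_0^t e^{-\beta s}(\widetilde R(q^*_s)-\widetilde C(\alpha^*_s))\,ds+e^{-\beta t}v(X_t)$ is constant, and deduce (ii)--(iv) from Lemma~\ref{lem:6} and Theorems~\ref{th:2}, \ref{th:3}, \ref{th:5}. The only cosmetic difference is that you differentiate (\ref{4.3}) directly where the paper passes through the integral form (\ref{4.12}); both rest on the same chain rule for absolutely continuous functions.
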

\begin{proof}
Since $H$ is differentiable on $F$ and $v':(0,\infty)\mapsto (0,\zeta)$ is a bijection, it follows that $H'(v'(x))$ is well-defined on the co-countable set $(v')^{-1}(F)\subset (0,\infty)$. The second derivative $v''$ exists a.e. on $(0,\infty)$. By the chain rule (see \cite[Corollary 3.48]{Leo09}), from the HJB equation it follows that
\begin{equation} \label{4.9}
 \beta v'(x)=H'(v'(x))v''(x)\quad\textrm{a.e. on } (0,\infty).
\end{equation}

Put $\widehat q(z)=-\widehat R'(z)$, $\widehat\alpha(z)=\widehat C'(z)$, $z\in F$. Then
\begin{equation} \label{4.10}
 H'(z)=\widehat R'(z)+\widehat C'(z)=-\widehat q(z)+\widehat\alpha(z)<0,\quad z\in F.
\end{equation}
%where $\widehat q(z)$, $\widehat\alpha(z)$ are uniquely defined: $\{\widehat q(z)\}=\mathscr M_R(z)$, $\{\widehat\alpha(z)\}=\mathscr M_C(z)$, $z\in D'$.
Using $\widehat q(v'(x))$, $\widehat\alpha(v'(x))$ formally as \emph{feedback controls}, by formulas (\ref{4.9}), (\ref{4.10}) we get
$$ dt=\frac{d X_t}{\widehat\alpha(v'(X_t)-\widehat q(v'(X_t))}=\frac{dX_t}{H'(v'(X_t))}=\frac{1}{\beta}d (\ln v'(X_t)). $$
Using the initial condition $X_0=x$, after the integration we get
\begin{equation} \label{4.11}
 t=\frac{1}{\beta}\ln\frac{v'(X_t)}{v'(x)}.
\end{equation}
The function
$$ y\mapsto\frac{1}{\beta}\ln\frac{v'(y)}{v'(x)}:[0,x]\mapsto [0,\tau]$$
is a bijection. Define the function $t\mapsto X_t:[0,\tau]\mapsto[0,x]$ by the equation (\ref{4.3}), which is identical to (\ref{4.11}). Since $v''<0$ a.e., from Lemma \ref{lem:3} it follows that the strictly decreasing function $t\mapsto X_t$ is absolutely continuous.

Let us proof that the strategy (\ref{4.4}), (\ref{4.5}) is optimal for the convexified problem (\ref{2.17}).
Since $v'$ is absolutely continuous (see Theorem \ref{th:1}), the function $\ln v'(y)$ is also absolutely continuous on $[0,x]$. Hence,
$$ \ln v'(x)-\ln v'(y)=\int_y^x\frac{d}{dz}\ln v'(z)\,dz=\int_y^x\frac{v''(z)}{v'(z)}\,dz=\beta\int_y^x\frac{dz}{H'(v'(z))}, \quad y\in [0,x]$$
and
\begin{equation} \label{4.12}
\frac{1}{\beta}\ln\frac{v'(x)}{v'(X_t)}=-t=-\int_{X_t}^x\frac{dz}{H'(v'(z))},\quad t\in [0,\tau].
\end{equation}
From (\ref{4.12}) by the chain rule \cite[Corollary 3.48]{Leo09} we get
$$\dot X_t=H'(v'(X_t))=\widehat\alpha(v'(X_t)-\widehat q(v'(X_t))=\alpha^*_t-q^*_t\quad\textrm{a.e. on } [0,\tau].$$
Here we used (\ref{4.10}) and (\ref{4.4}).
For $t>\tau$ the relation $\dot X_t=\alpha^*_t-q^*_t=0$ is trivially satisfied. We conclude that the strategy $(\alpha^*,q^*)$ is admissible, since it induces a non-negative state process $X=X^{x,\alpha^*,q^*}$.

To prove that $(\alpha^*,q^*)$ is optimal, it is enough to show that the function
\begin{equation} \label{4.13}
W(t)=\int_0^t e^{-\beta s}(\widetilde R(q_s^*)-\widetilde C(\alpha_s^*))\,ds+e^{-\beta t} v(X_t^{x,\alpha^*,q^*})
\end{equation}
is constant, since then
$$ W(0)=v(x)=\lim_{t\to\infty}W(t)=\int_0^\infty e^{-\beta s}(\widetilde R(q_s^*)-\widetilde C(\alpha_s^*))\,ds.$$
Differentiating (\ref{4.13}), by (\ref{4.4}) we get
\begin{align*}
\dot W &=e^{-\beta t}\left(\widetilde R(q_t^*)-\widetilde C(\alpha_t^*)-\beta v(X_t^{x,\alpha^*,q^*})+v'(X_t^{x,\alpha^*, q^*})(\alpha_t^*- q_t^*)\right)\\
 &=e^{-\beta t}\left(-\beta v(X_t^{x,\alpha^*,q^*})+\widehat R(v'(X_t^{x,\alpha^*, q^*}))+\widehat C(v'(X_t^{x,\alpha^*, q^*}))\right)\\
 &=e^{-\beta t}\left(-\beta v(X_t^{x,\alpha^*,q^*})+H(v'(X_t^{x,\alpha^*,q^*}))\right)=0\quad \textrm{a.e. on } (0,\tau).
\end{align*}
For $t>\tau$ we have $X_t^{x,\alpha^*,q^*}=0$, $\alpha^*_t=q^*_t=\widehat u$ and
$$ W(t)=\int_0^\tau e^{-\beta s}(\widetilde R(q_s^*)-\widetilde C(\alpha_s^*))\,ds+\frac{1}{\beta}e^{-\beta\tau}(\widetilde R(\widehat u)-\widetilde C(\widehat u)),$$
since $-e^{-\beta t}(\widetilde R(\widehat u)-\widetilde C(\widehat u))/\beta+e^{-\beta t}v(0)=0$ by the optimality of $\widehat u$ for zero initial inventory: see Theorem \ref{th:4}.

Since $v'(X_t)\in F$, $t\in\mathscr T$, by Lemma \ref{lem:6} we infer that the relations (\ref{4.6}) hold true. It follows that
$$ v(x)=\int_0^\tau e^{-\beta s}(R(q_s^*)-C(\alpha_s^*))\,ds+e^{-\beta \tau} v(0)$$
since $W$ is constant. From Theorems \ref{th:5} and \ref{th:4} it follows that the strategies (\ref{4.4}), (\ref{4.7}) and (\ref{4.4}), (\ref{4.8}) (under the condition $\mathscr M_\zeta\neq \emptyset$) give the same value of the objective functional as (\ref{4.4}), (\ref{4.5}). Hence, they are optimal for the problems (\ref{2.18}) and (\ref{2.4}) respectively.
\end{proof}

Note, that by (\ref{4.10}) the state process $X^{x,\alpha^*,q^*}$, induced by (\ref{4.4}) is strictly decreasing on $(0,\tau)$.
In the case of $\mathscr M_\zeta=\emptyset$ one can also use an approximately optimal strategy (\ref{3.15}), (\ref{3.16}) on $(\tau,\infty)$ instead of an optimal relaxed strategy (\ref{4.7}).

\section{The case of strictly concave revenue and linear production cost}
\label{sec:5}
\setcounter{equation}{0}
Let $Q=[0,\overline q]$, $A=[0,\overline\alpha]$, $\overline q$, $\overline\alpha>0$. Assume that $R$ is differentiable and strictly concave, and let $C=c\alpha$, $c>0$. An optimal strategy is completely described by the functions $\widehat q(v'(x))$, $\widehat\alpha(v'(x))$ and the value $\widehat u$, given in Theorem \ref{th:6}.

We have
$$ H(z)=\sup_{q\in [0,\overline q]}(R(q)-qz)+\sup_{\alpha\in [0,\overline\alpha]}(\alpha z-C(\alpha))
=\widehat R(z)+\overline\alpha(z-c)^+,\quad x^+=\max\{0,x\}.$$
Since $\mathscr M_R(z)$ contains exactly one element $\widehat q(z)$, the function $\widehat R$ is continuously differentiable (see \cite[Theorem 25.5]{Roc70}), and $\widehat q(z)=-\widehat R'(z)$ is continuous.

If $R'(0)\le 0$, then $\widehat q(z)=0$, $z\ge 0$ and $\zeta=\min\mathscr M_H=\{0\}$. From theorem \ref{th:1} it follows that $v(x)=H(0)/\beta=0$ and the problem is trivial. Thus, we may assume that $R'(0)>0$.

The calculation of an optimal static strategy  (\ref{3.1}) for zero initial inventory is simple:
$$ \{\widehat u\}=\arg\max_{u\in [0,\overline\alpha\wedge\overline q]}(R(u)-cu)=\begin{cases}
0,& R'(0)\le c,\\
(R')^{-1}(c),& R'(0)>c, R'(\overline\alpha\wedge\overline q)<c,\\
\overline\alpha\wedge\overline q,& R'(\overline\alpha\wedge\overline q)\ge c.
\end{cases}$$
Furthermore, from the formula
\begin{equation} \label{5.1}
 H'(z)=\begin{cases}
-\widehat q(z),& z\in (0,c),\\
\overline\alpha-\widehat q(z),& z>c
\end{cases}
\end{equation}
it follows that $\widehat q$ is non-increasing, since $H$ is convex. Clearly, $\lim_{z\to+\infty}\widehat q(z)=0$ (and $\widehat q(z)=0$ for sufficiently large $z$, if $R'(0)<+\infty$). From (\ref{5.1}) we conclude that
$$ \zeta=\min\mathscr M_H=\inf\{z\ge 0:\widehat q(z)=0\}\wedge\inf\{z\ge c:\widehat q(z)\le\overline\alpha\}>0.$$
The function $\widehat q(v'(x))$, $x\in (0,\infty)$ is non-decreasing and
$$ \lim_{x\to 0}\widehat q(v'(x))=\widehat q(v'(0))=\widehat q(\zeta)=\widehat u,$$
$$ \lim_{x\to +\infty}\widehat q(v'(x))=\widehat q(0)=\arg\max_{q\in [0,\overline q]} R(q),$$
by Theorems \ref{th:1} and \ref{th:3}.

The function $\widehat \alpha(v'(x))$, $x\in (0,\infty)$ is piecewise constant:
$$ \widehat\alpha(v'(x))=\begin{cases}
\overline\alpha,& v'(x)>c,\\
0,& v'(x)<c.
\end{cases}$$
In particular, $\widehat\alpha(v'(x))=0$, $x>0$ if $\zeta=v'(0)<c$. Note, that in the opposite case, where the production price $c$ is lower than the marginal indirect utility of zero inventory: $\zeta=v'(0)>c$, the production should start when the inventory is reduced to the level $\widehat x>0$, defined by the equation $v'(\widehat x)=c$. However, after the production is switched on, the inventory level still decreases and reaches zero in finite time $\tau$.

From (\ref{4.9}) it follows that $v$ is twice continuously differentiable on $(0,\widehat x)\cup(\widehat x,+\infty)$, and $v''=\beta v'/H'(v')$ on this set. However, $v''$ is discontinuous at $\widehat x$:
$$ v''(\widehat x+0)=\frac{\beta c}{H'(c+)}\neq v''(\widehat x-0)=\frac{\beta c}{H'(c-)}.
$$
Thus, $v''$ can be discontinuous under assumptions of Theorem \ref{th:1}.

\section{Arvan-Moses example}
\label{sec:6}
\setcounter{equation}{0}
In the example of \cite{ArvMos81} the demand is linear, hence the revenue looks as follows:
$$ R(q)=(A-Bq)q,\quad q\in [0,A/B].$$
The production cost
$$ C(\alpha)=\alpha^3/3-K\alpha^2+K^2\alpha,\quad \alpha\ge 0$$
is concave on $[0,K]$ and convex on $[K,\infty)$. It is assumed that where $A, B, K>0$.
%We will prove that a stationary strategy is \emph{not} optimal if and only if
%$$ \frac{K^2}{4}<A<\frac{K^2}{4}+3 B K.$$
We have
$$ R'(q)=A-2Bq,\quad R''(q)=-2B,$$
$$ C'(\alpha)=(\alpha-K)^2,\quad C''(\alpha)=2(\alpha-K).$$
Note, that the function $C$ is strictly increasing and $1$-coercive. The function $R$ is strictly concave.
%he function $R$ is concave and positive on $[0,A/B]$, $R(0)=R(A/B)=0$.

Take the largest $s$ such that $s\alpha\le C(\alpha)$, $\alpha\ge 0$. We have
$$ s=\inf_{\alpha\ge 0}\{\alpha^2/3-K\alpha+K^2\}=\frac{K^2}{4},$$
where infimum is attained at $\alpha_0=3K/2$. It is easy to see that
$$ \widetilde C(\alpha)=(\co C)(\alpha)=\begin{cases}
K^2\alpha/4,&\alpha\in [0,3K/2],\\
C(\alpha),&\alpha\ge 3K/2.
\end{cases}$$

For the convexified problem (\ref{2.17}) an optimal static strategy for zero initial inventory is given by
$$ \widehat u\in\arg\min_{u\in [0,A/B]}(R(u)-\widetilde C(u)).$$
We have
$$ R'(u)-\widetilde C'(u)=\begin{cases}
A-2Bu-K^2/4,& u<3K/2,\\
A-2Bu-(u-K)^2,& u>3K/2.
\end{cases}$$
It follows that
\begin{equation} \label{6.1}
\widehat u=\begin{cases}
0,& A\le K^2/4,\\
(A-K^2/4)/(2B),& K^2/4\le A\le 3BK+K^2/4,\\
-B+K+\sqrt{B^2-2BK+A},& A\ge 3BK+K^2/4.
\end{cases}
\end{equation}

For $A\le K^2/4$ and $A\ge 3BK+K^2/4$ we have $\widetilde C(\widehat u)=C(\widehat u)$. Hence, in these cases $\alpha_t=q_t=\widehat u$ is an optimal solution for the original problem (\ref{3.4}). If
\begin{equation} \label{6.2}
\frac{K^2}{4}< A< 3BK+\frac{K^2}{4},
\end{equation}
then $\widehat u$ belongs to the interval $(0,3K/2)$, where $\widetilde C$ is linear and $\widetilde C<C$. In this case an optimal relaxed production control is constructed by formulas (\ref{3.9}) -- (\ref{3.11}):
\begin{equation} \label{6.3}
\overline\alpha_t(dx)=\nu\delta_0+(1-\nu)\delta_{3K/2},\quad (1-\nu)\frac{3K}{2}=\widehat u=\left(A-\frac{K^2}{4}\right)\frac{1}{2B}.
\end{equation}

We see that for zero initial inventory an ordinary static strategy is not optimal iff the condition (\ref{6.2}) is satisfied. In this case instead of the relaxed strategy (\ref{6.3}) one can use an approximately optimal strategy (\ref{3.15}), (\ref{3.16}). Under this strategy the inventory remains close to $0$: $X_t\le b\varepsilon$, $b>0$ and demonstrates cyclic accumulation-decumulation behavior, described in \cite{ArvMos81}. However, it need not produce discounted profit close to optimal if accumulation-decumulation cycles are not small. It is also interesting to note that the condition
$$ K^2< A< 3BK+\frac{K^2}{4},$$
quite similar to (\ref{6.2}), appeared in \cite{ArvMos81}.

Let $z\ge 0$. Denote by $\widehat\alpha$, $\widehat q$ the maximum points of
$$ z\alpha-\widetilde C(\alpha)\to\max_{\alpha\ge 0},\quad R(q)-z q\to\max_{q\in [0,A/B]}.$$
We have
\begin{equation} \label{6.4}
\widehat\alpha(z)=\begin{cases}
    0,      &\quad z\in [0,K^2/4),\\
    K+\sqrt z, & \quad z> K^2/4,
  \end{cases}\quad
\widehat q(z)=\begin{cases}
    (A-z)/(2B),      &\quad z\in [0,A],\\
    0,       & \quad z\ge A,
  \end{cases}
\end{equation}
and $\widehat\alpha(K^2/4)\in [0,3K/2]$. Let us find the least mimimum point $\zeta=\min\mathscr M_H$ of the Hamiltonian
\begin{align*}
H(z)&=\sup_{q\in [0, A/B]}\{(A-B q) q-zq\}+\sup_{\alpha\ge 0}\{z\alpha-\widetilde C(\alpha)\}\\
 &=\frac{(z-A)^2}{4B} I_{\{z< A\}}+(z\widehat\alpha(z)-C(\widehat\alpha(z))) I_{\{z> K^2/4\}},\quad z\ge 0.
\end{align*}
If $A\le K^2/4$, then $\zeta=A$. Otherwise, consider
$$ H'(z)=\frac{z-A}{2B}+\widehat\alpha(z)=\frac{z-A}{2B}+K+\sqrt z,\quad z\in \left(\frac{K^2}{4},A\right).$$
Since $\lim_{z\nearrow A} H'(z)>0$, it follows that $\zeta\in (K^2/4,A)$ iff
$$ \lim_{z\searrow K^2/4}=\frac{1}{2B}\left(\frac{K^2}{4}-A+3BK\right)<0$$
Under this condition $\zeta$ is defined by the equation $H'(\zeta)=0$, $\zeta\in (K^2/4,A)$. Otherwise, $\zeta=K^2/4$. Collecting all cases, considered above, we get
$$\zeta=\begin{cases}
A,& A\le K^2/4,\\
K^2/4,& K^2/4\le A \le K^2/4+3BK,\\
\left(-B+\sqrt{B^2-2BK+A}\right)^2,& A\ge K^2/4+3BK.
\end{cases}
$$
Note, that these three cases are the same as in (\ref{6.1}).

Let us consider the feedback strategies $\widehat q(v'(x))$, $\widehat\alpha(v'(x))$, defined in Theorem \ref{th:6}. Since $v'(x)\le v'(0)=\zeta$ and $\zeta\le A$ for any set of parameters, we get
$$ \widehat q(v'(x))=\frac{A-v'(x)}{2B},\quad x>0.$$
Hence, $\widehat q(v'(x))$, $x>0$ is a strictly increasing positive function. Moreover, it is easy to see that
$$ \lim_{x\searrow 0}\widehat q(v'(x))=\frac{A-\zeta}{2B}=\widehat u.$$

If $A\le K^2/4+3BK$, then $\zeta\le K^2/4$ and
$$ \widehat\alpha(v'(x))=0,\quad x>0.$$
If $A>K^2/4+3BK$, then $v'(0)=\zeta>K^2/4$ and there exists a unique point $\widehat x>0$ such that $v'(\widehat x)=K^2/4$. In this case
$$ \widehat\alpha(v'(x))=\begin{cases}
K+\sqrt{v'(x)},& x<\widehat x,\\
0,& x>\widehat x.
\end{cases}$$

Thus, we have three cases. (i) If $A\le K^2/4$, then the firm should optimally sell the initial inventory:
$$ \dot X_t=-\widehat q(v'(X_t))=-\frac{A-v'(X_t)}{2B}<0,\quad X_t>0.$$
The production is shut down.

(ii) If $\quad K^2/4<A< K^2/4+3BK$, then the production starts after selling the initial inventory, and the relaxed  production strategy (\ref{6.3}) should meet the demand $\widehat q(\zeta)=\widehat u=(A-K^2/4)/2B$.

(iii) If $A\ge K^2/4+3BK$, then the production starts after the inventory falls below $\widehat x$. The inventory still decreases to $0$
%:$$ \dot X_t=\widehat\alpha(v'(X_t))-\widehat q(v'(X_t))=K+\sqrt{v'(X_t)}-\frac{A-v'(X_t)}{2B}<0,\quad X_t>0$$
and stabilizes at this level. Optimal demand and production intensities of the related stable regime are equal to $\widehat u$, defined by (\ref{6.1}).

Finally, to illustrate the result of Theorem \ref{th:3} we will show that $\mathscr M_\eta\neq\emptyset$ iff (\ref{6.2}) is satisfied. Let $\varphi(\alpha,z)=z\alpha-C(\alpha)$. Considering
$$ \frac{\partial\varphi}{\partial\alpha}(\alpha,z)=z-(\alpha-K)^2,\quad \alpha>0,$$
we infer that $\varphi(\cdot,z)$ has two local maximum points $\alpha_1=0$, $\alpha_2=K+\sqrt z$ for $z\in (0, K^2)$ and the global maximum point $\alpha_2=K+\sqrt z$ for $z\ge K^2$. Furthermore,
$$\frac{d}{dz}\varphi(\alpha_2(z),z)=\alpha_2(z)+(z-(\alpha_2(z)-K)^2)\alpha_2'(z) =\alpha_2(z)>0,\quad z>0$$
and $\varphi(\alpha_2(K^2/4),K^2/4)=\varphi(3K/2,K^2/4)=0.$
It follows that
$$\varphi(\alpha_1(z),\eta)=0>\varphi(\alpha_2(z),z)\quad \textrm{iff}\quad z<K^2/4$$
and
$$ \mathscr M_C(z)=\arg\max_{\alpha\ge 0}(z\alpha-C(\alpha))=
\begin{cases}
0,& z\in [0,K^2/4),\\
\{0,3K/2\},& z=K^2/4,\\
K+\sqrt z,& z>K^2/4.
\end{cases}$$
Clearly, $\mathscr M_\eta\neq\emptyset$, $\eta\ge 0$ iff
$\widehat q(\eta)\not\in\mathscr M_C(\eta)$, $\eta\ge 0,$ where $\widehat q(\eta)$ is defined by (\ref{6.4}).
It is easy to see that this is the case iff $A>K^2/4$ and
$$\widehat q(K^2/4)=\frac{A-K^2/4}{2B}<\frac{3K}{2},$$
which is the same as the right inequality in (\ref{6.2}). Thus, Theorem \ref{th:3} again implies that there is no optimal ordinary stationary strategy for zero initial inventory iff (\ref{6.2}) is satisfied.

%% The Appendices part is started with the command \appendix;
%% appendix sections are then done as normal sections
%% \appendix

%% \section{}
%% \label{}

%% If you have bibdatabase file and want bibtex to generate the
%% bibitems, please use
%%
  \bibliographystyle{plain}
  \bibliography{litInventory}

\begin{thebibliography}{10}
\bibitem{ArvMos81}
L.~Arvan and L.N. Moses.
\newblock Inventory investment and the theory of the firm.
\newblock University of Illinois at Urbana-Champaign, Working paper No. 756, 31
  pages, 1981.

\bibitem{ArvMos82}
L.~Arvan and L.N. Moses.
\newblock Inventory investment and the theory of the firm.
\newblock {\em Am. Econ. Rev.}, 72(1):186--193, 1982.

\bibitem{AusTeb03}
{Auslender A.} and {Teboulle M.}
\newblock {\em Asymptotic cones and functions in optimization and variational
  inequalities}.
\newblock Springer, New York, 2003.

\bibitem{ChaJouTah03}
M.~Chazal, E.~Jouini, and R.~Tahraoui.
\newblock Production planning and inventories optimization with a general
  storage cost function.
\newblock {\em Nonlinear Anal.-Theor.}, 54(8):1365--1395, 2003.

\bibitem{CheLev12}
X.~Chen and D.~Simchi-Levi.
\newblock Joint pricing and inventory management.
\newblock In \"Ozer \"O. and Phillips R., editors, {\em The Oxford Handbook of
  Pricing Management}, pages 784--824. Oxford University Press, Oxford, 2012.

\bibitem{EliSte87}
J.~Eliashberg and R.~Steinberg.
\newblock Marketing-production decisions in an industrial channel of
  distribution.
\newblock {\em Manage. Sci.}, 33(8):981--1000, 1987.

\bibitem{EliSte93}
J.~Eliashberg and R.~Steinberg.
\newblock Marketing-production joint decision-making.
\newblock In J.~Elishaberg and G.L. Lilien, editors, {\em Marketing}, volume~5
  of {\em Handbooks in operations research and management science}, pages
  65--103. Elsevier Science, Amsterdam, 1993.

\bibitem{FeiHar85}
G.~Feichtinger and R.~Hartl.
\newblock Optimal pricing and production in an inventory model.
\newblock {\em Eur. J. Oper. Res}, 19(1):45--56, 1985.

\bibitem{HirUrrLem93}
J.-B. Hiriart-Urruty and C.~Lemar{\'e}chal.
\newblock {\em Convex analysis and minimization algorithms II. Advanced theory
  and bundle methods}.
\newblock Springer, Berlin, 1993.

\bibitem{HirUrrLem01}
J.-B. Hiriart-Urruty and C.~Lemar{\'e}chal.
\newblock {\em Fundamentals of convex analysis}.
\newblock Springer, Berlin, 2001.

\bibitem{Leo09}
G.~Leoni.
\newblock {\em A first course in {S}obolev spaces}.
\newblock Amer. Math. Soc., Providence, RI, 2009.

\bibitem{Lor87}
P.~Loreti.
\newblock Some properties of constrained viscosity solutions of
  {H}amilton-{J}acobi-{B}ellman equations.
\newblock {\em SIAM J. Control Optim.}, 25(5):1244--1252, 1987.

\bibitem{Nat64}
I.P. Natanson.
\newblock {\em Theory of functions of a real variable}.
\newblock Frederick Ungar, New York, 2009.

\bibitem{Pek74}
D.~Pekelman.
\newblock Simultaneous price-production decisions.
\newblock {\em Oper. Res.}, 22(4):788--794, 1974.

\bibitem{Ram91}
V.A. Ramey.
\newblock Nonconvex costs and the behavior of inventories.
\newblock {\em J. Polit. Econ.}, 99(2):306--334, 1991.

\bibitem{Roc70}
R.T. Rockafellar.
\newblock {\em Convex Analysis}.
\newblock Princeton University Press, Princeton, 1970.

\bibitem{RockWets09}
R.T. Rockafellar and R.J.-B. Wets.
\newblock {\em Variational analysis}.
\newblock Springer-Verlag, Berlin, 2009.

\bibitem{Son86}
H.M. Soner.
\newblock Optimal control with state-space constraint. {I}.
\newblock {\em SIAM J. Control Optim.}, 24(3):552--561, 1986.

\bibitem{ThoSetTen84}
G.L. Thompson, S.P. Sethi, and J.T. Teng.
\newblock Strong planning and forecast horizons for a model with simultaneous
  price and production decisions.
\newblock {\em Eur. J. Oper. Res}, 16(3):378--388, 1984.

\bibitem{Var92}
H.R. Varian.
\newblock {\em Microeconomic analysis}.
\newblock W. W. Norton, New York, 1992.

\bibitem{Vill84}
A.~Villani.
\newblock On {L}usin's condition for the inverse function.
\newblock {\em Rend. Circ. Mat. Palermo}, 33(3):331--335, 1984.

\bibitem{YanGil04}
C.A. Yano and S.M. Gilbert.
\newblock Coordinated pricing and production/procurement decisions: A review.
\newblock In A.K. Chakravarty and J.~Eliashberg, editors, {\em Managing
  Business Interfaces}, volume~16 of {\em International Series in Quantitative
  Marketing}, pages 65--103. Springer, New York, 2004.












\end{thebibliography}

%% else use the following coding to input the bibitems directly in the
%% TeX file.

%\begin{thebibliography}{00}

%% \bibitem{label}
%% Text of bibliographic item

%\bibitem{}

%\end{thebibliography}
\end{document}